\newtheorem{theorem}{Theorem}
\newtheorem{lemma}{Lemma}
\newtheorem{proposition}{Proposition}
\newtheorem{corollary}{Corollary}
\newtheorem{observation}{Observation}
\newcommand{\hypvert}[1]{\mathcal{V}\mathcal{#1}}
\newcommand{\hypedge}[1]{\mathcal{E}\mathcal{#1}}
\newcommand\ceil[1]{\lceil#1\rceil}
\title{Graph Hausdorff dimension, Kolmogorov complexity and construction of fractal networks}
\author[1]{Leonid Bunimovich}
\author[2,3]{Pavel Skums}
\affil[1]{School of Mathematics, Georgia Institute of Technology,\newline 686 Cherry St NW, Atlanta, GA, USA, 30313}
\affil[2]{Department of Computer Science, Georgia State University,\newline 1 Park Pl NE, Atlanta, GA, USA, 30303}
\affil[3]{Corresponding author. \textit {Email: pskums@gsu.edu}}
\date{\today}
\begin{document}
\maketitle

\begin{abstract}
In this paper we introduce
and study discrete analogues of Lebesgue and Hausdorff dimensions for graphs. It turned out that they are
closely related to well-known graph characteristics such as rank dimension and Prague (or Ne{\v s}et{\v r}il-R{\"o}dl) dimension. It
allows us to formally define fractal graphs and establish fractality of some graph classes. We show, how Hausdorff dimension of graphs is related to their Kolmogorov complexity. We also demonstrate applications of this approach by establishing a novel property of general compact metric spaces using ideas from hypergraphs theory and by proving an estimation for Prague dimension of almost all graphs using methods from algorithmic information theory.

\medskip

{\bf Keywords:} Prague dimension, rank dimension, Lebesgue dimension, Hausdorff dimension, Kolmogorov complexity, fractal
\end{abstract}

\section{Introduction}

 Lately there was a growing interest in studying self-similarity and fractal
properties of graphs, which is largely inspired by applications in biology, sociology and chemistry \cite{song2005self,shanker2007defining}. Such studies often employ
statistical physics methods that use ideas from graph theory and general topology, but are not intended to approach the problems under consideration in a rigorous mathematical way. Several studies that translate certain notions of topological dimension theory to graphs using combinatorial methods are also known \cite{smyth2010topological,evako1994dimension}.  However, to the best of our knowledge a rigorous combinatorial theory that defines and studies graph-theoretical analogues of topological fractals still has not been developed. 

In this paper we introduce and study graph analogues of Lebesgue and Hausdorff dimensions of topological spaces from the graph-theoretical point of view. We show that they are closely related to well-known graph characteristics such as rank dimension \cite{berge1984hypergraphs} and Prague (or Ne{\v s}et{\v r}il-R{\"o}dl) dimension  \cite{hell2004graphs}. It
allowed us to define fractal graphs and determine fractality of some graph classes. In particular, it occurred that fractal graphs in some sense could be considered as generalizations of class 2 graphs. We demonstrate that various properties of dimensions of compact topological spaces have graph-theoretical analogues. Moreover, we also show how these relations allow for reverse transfer of combinatorial results to the general topology by proving a new property of general compact metric spaces using machinery from theory of hypergraphs. Finally, we show how Hausdorff (and Prague) dimension of graphs is related to Kolmogorov complexity. This relation allowed us to find a lower bound for Prague dimension for almost all graphs using incompressibility method from the theory of Kolmogorov complexity.

\section{Basic definitions and facts from measure theory, dimension theory and graph theory}
\label{sec:examples}

Let $X$ be a compact metric space. A family $\mathcal{C} = \{C_{\alpha} : \alpha \in A\}$ of open subsets of $X$ is a {\it cover}, if $X = \bigcup_{\alpha \in A} C_{\alpha}$. A cover $\mathcal{C}$ is {\it $k$-cover}, if every $x\in X$ belongs to at most $k$ sets from $\mathcal{C}$; {\it $\epsilon$-cover}, if for every set $C_i\in \mathcal{C}$ its diameter $diam(C_i)$ does not exceed $\epsilon$; $(\epsilon, k)$-cover, if it is both $\epsilon$-cover and $k$-cover. {\it Lebesgue dimension} ({\it cover dimension}) $dim_L(X)$ of the space $X$ is the minimal integer $k$ such that for every $\epsilon > 0$ there exists $(\epsilon, k+1)$-cover of $X$. 

Let $\mathcal{F}$ be a semiring of subsets of a set X. A function $m:\mathcal{F}\rightarrow \mathbb{R}^+_0$ is {\it a measure}, if $m(\emptyset) = 0$ and for any disjoint sets $A,B\in \mathcal{F}$ $m(A\cup B) = m(A) + m(B)$. 

Let now $X$ be a subspace of an Euclidean space $\mathbb{R}^d$. {\it Hyper-rectangle} $R$ is a Cartesian product of semi-open intervals: $R = [a_1,b_1)\times\dots\times [a_d,b_d)$, where $a_i < b_i$, $a_i,b_i \in \mathbb{R}$; the {\it volume} of a the hyper-rectangle $R$ is defined as $vol(R) = \prod_{i=1}^d (b_i - a_i)$. The {\it $d$-dimensional Jordan measure} of the set $X$ is the value
$\mathcal{J}^d(X) = \inf\{\sum_{R\in \mathcal{C}} vol(R)\},$
where infimum is taken over all finite covers $\mathcal{C}$ of $X$ by disjoint hyper-rectangles. The {\it $d$-dimensional Lebesgue measure} of a measurable set $\mathcal{L}^d(X)$ is defined analogously, with the additional condition that infimum  is taken over all countable covers $\mathcal{C}$ of $X$ by (not necessarily disjoint) hyper-rectangles.

Let $s >0$ and $\epsilon > 0$. Consider the parameter
$\mathcal{H}^s_{\epsilon}(X) = \inf\{\sum_{C\in \mathcal{C}}diam(C)^s\},$ where infimum is taken over all $\epsilon$-covers of $X$. The {\it $s$-dimensional Hausdorff measure} of the set $X$ is defined as $\mathcal{H}^s(X) = \lim_{\epsilon \rightarrow 0} \mathcal{H}^s_{\epsilon}(X)$.

The aforementioned measures are related as follows. If Jordan measure of the set $X$ exists, then it is equal to its Lebesgue measure. For Borel sets Lebesgue measure and Hausdorff measure are equivalent in the sense, that for any Borel set $Y$ we have $\mathcal{L}^d(Y) = C_d\mathcal{H}^d(Y)$, where $C_d$ is a constant depending only on $d$.

{\it Hausdorff dimension} $dim_H(X)$ of the set $X$ is the value

\begin{equation}\label{hdimtop}
dim_H(X) = \inf\{s \geq 0 :\mathcal{H}^s(X) < \infty\}.
\end{equation}

Lebesgue and Hausdorff dimension of $X$ are related as follows:

\begin{equation}\label{lebvshaus}
dim_L(X) \leq dim_H(X).
\end{equation}

The set $X$ is {\it a fractal} \cite{falconer2004fractal}, if the inequality (\ref{lebvshaus}) is strict.

Let $G=(V(G),E(G))$ be a simple graph. The family of subgraphs $\mathcal{C} = \{C_1,...,C_m\}$ of $G$ is a {\it cover}, if every edge $uv\in E(G)$ belongs to at least one subgraph from $\mathcal{C}$. A cover $\mathcal{C}$ is {\it $k$-cover}, if every vertex $v\in V(G)$ belongs to at most $k$ subgraphs of $\mathcal{C}$; {\it clique cover}, if all subgraphs $C_i$ are cliques. A set $W\subseteq V(G)$ {\it separates} vertices $u,v\in V(G)$, if $|W\cap \{u,v\}| = 1$.

For a hypergraph $\mathcal{H} = (\mathcal{V}(\mathcal{H}),\mathcal{E}(\mathcal{H}))$, its  {\it rank} $r(\mathcal{H})$ is the maximal size of its edges. A hypergraph $\mathcal{H}$ is {\it strongly $k$-colorable}, if for every vertex a color from the set $\{1,...,k\}$ can be assigned in such a way that vertices of every edge receive different colors.

Intersection graph $L = L(\mathcal{H})$ of a hypergraph $\mathcal{H}$ is a simple graph with a vertex set $V(L) = \{v_E : E\in \mathcal{E}(\mathcal{H})\}$ in a bijective correspondence with the edge set of $\mathcal{H}$ and two distinct vertices $v_E,v_F\in V(L)$ being adjacent, if and only if $E\cap F \ne \emptyset$. The following theorem establishes a connection between intersection graphs and clique $k$-covers:

\begin{theorem}\cite{berge1984hypergraphs}\label{thm:covervsinter}
	A graph $G$ is an intersection graph of a hypergraph of rank $\leq k$ if and only if it has a clique $k$-cover.
\end{theorem}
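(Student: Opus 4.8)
The plan is to prove the two implications separately, in each case converting the combinatorial data on one side into either the ground set or the edge set of a hypergraph on the other, and then checking that adjacency in the intersection graph matches exactly the cover condition.

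I would first treat the direction ($\Rightarrow$): assume $G = L(\mathcal{H})$ for a hypergraph $\mathcal{H}$ of rank $\le k$. The idea is that the ground elements of $\mathcal{H}$ index the cliques. For each $x \in \mathcal{V}(\mathcal{H})$ set $S_x = \{v_E : x \in E\}$, the set of hyperedges containing $x$. Any two members $v_E, v_F$ of $S_x$ share the element $x$, so $E \cap F \neq \emptyset$ and $v_E v_F \in E(G)$; hence each $S_x$ induces a clique. If $v_E v_F \in E(G)$ then $E \cap F \neq \emptyset$, so some $x$ lies in both $E$ and $F$ and the edge is covered by $S_x$; thus $\{S_x\}_{x}$ is a clique cover. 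Finally, $v_E$ belongs to exactly the cliques $S_x$ with $x \in E$, that is, to $|E| \le k$ of them, so the cover is a $k$-cover.

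For the converse ($\Leftarrow$), assume $\mathcal{C} = \{Q_1, \dots, Q_m\}$ is a clique $k$-cover and build the dual hypergraph: take $\mathcal{V}(\mathcal{H}) = \{1, \dots, m\}$, one ground element per clique, and for each $v \in V(G)$ introduce a hyperedge $E_v = \{i : v \in Q_i\}$, so the hyperedges are indexed by $V(G)$. Since every vertex lies in at most $k$ cliques, $|E_v| \le k$ and $r(\mathcal{H}) \le k$. I would then verify that $L(\mathcal{H}) \cong G$ via $v \mapsto v_{E_v}$: if $uv \in E(G)$ some clique $Q_i$ contains both, so $i \in E_u \cap E_v$ and the images are adjacent; if $uv \notin E(G)$ no clique contains both, since cliques are complete, so $E_u \cap E_v = \emptyset$ and the images are non-adjacent.

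The point needing care --- and the main obstacle --- is the bijection between $V(G)$ and $\mathcal{E}(\mathcal{H})$ in the converse: distinct vertices $u \neq v$ may lie in exactly the same cliques, giving $E_u = E_v$ as sets, and a vertex in no clique yields $E_v = \emptyset$. To keep $v \mapsto v_{E_v}$ a genuine bijection onto the vertex set of the intersection graph I would treat $\mathcal{E}(\mathcal{H})$ as a family indexed by $V(G)$ (a multi-hypergraph), so that coincident or empty edges stay distinct vertices of $L(\mathcal{H})$. This is harmless, since two equal nonempty edges still intersect, matching the adjacency of $u$ and $v$, while empty edges intersect nothing, matching isolated vertices; note that adding private singleton elements to force distinctness would instead inflate the rank to $k+1$, so the indexed-family convention is preferable. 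With this convention the two constructions are mutually inverse and the equivalence follows.
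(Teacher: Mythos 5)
The paper itself does not prove this statement: it is imported verbatim from Berge \cite{berge1984hypergraphs} as a known result, so there is no in-paper argument to compare yours against. Judged on its own, your proof is correct and is the classical duality argument: in one direction the ground elements of $\mathcal{H}$ index the cliques $S_x = \{v_E : x \in E\}$, and in the other the cliques become ground elements and each vertex becomes the dual edge $E_v = \{i : v \in Q_i\}$, with rank and $k$-cover conditions exchanging roles through $|E_v| \le k$. You also correctly isolate the one genuinely delicate point, non-injectivity of $v \mapsto E_v$, and your resolution (treat $\mathcal{E}(\mathcal{H})$ as a family indexed by $V(G)$, i.e.\ allow repeated edges) is exactly the convention under which Berge's theorem is stated; it is also consistent with the paper's own definition of intersection graph, which puts $V(L)$ in bijective correspondence with the edge set rather than deduplicating it. One small refinement: the empty-edge problem (isolated vertices outside every clique) does not require the family convention at all --- giving each such vertex a private ground element yields a singleton edge of size $1 \le k$, so no rank inflation occurs; the inflation to $k+1$ that you rightly want to avoid only threatens the separate problem of coincident nonempty edges, where the indexed-family convention is indeed the clean fix. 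Finally, note that the dual construction in your converse is precisely the device the paper reuses in its proof of Proposition \ref{topspacehyper} (there $e_x = \{i : x \in C_i\}$ for a cover $\mathcal{C}$ of a metric space), so your argument matches the machinery the paper attributes to intersection graph theory.
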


{\it Rank dimension} \cite{metelsky2003} $dim_R(G)$ of a graph $G$ is the minimal $k$ such that $G$ satisfies conditions of Theorem \ref{thm:covervsinter}.  In particular, graphs with $dim_R(G)=1$ are disjoint unions of cliques (such graphs are called {\it equivalence graphs} \cite{alon1986covering} or {\it $M$-graphs} \cite{tyshkevich1989matr}), and graphs with $dim_R(G)=2$ are line graphs of multigraphs.

%% survey previous studies of krausz dimension

{\it Categorical product} of graphs $G_1$ and $G_2$ is the graph $G_1 \times G_2$ with the vertex set $V(G_1\times G_2) = V(G_1)\times V(G_2)$ with two vertices $(u_1,u_2)$ and $(v_1,v_2)$ being adjacent whenever $u_1v_1\in E(G_1)$ and $u_2v_2\in E(G_2)$. {\it Prague dimension} $dim_P(G)$ is the minimal integer $d$ such that $G$ is an induced subgraph of a categorical product of $d$ complete graphs.

{\it Equivalent cover} $\mathcal{M} = \{M_1,...,M_k\}$ of the graph $G$ consists of spanning subgraphs such that each subgraph $M_i$ is an equivalence graph.  Equivalent cover is {\it separating}, if every two distinct vertices of $G$ are separated by one of connected components in some subgraph from $\mathcal{M}$. 

Relations between Prague dimension, clique covers, vertex labeling and intersection graphs are described by the following theorem:

\begin{theorem}\cite{hell2004graphs,babaits1996kmern}\label{thm:pdimcharact}
	The following statements are equivalent:
	
	1) $dim_P(\overline{G}) \leq k$;
	
	2) there exists a separating equivalent cover of $G$;
	
	3) $G$ is an intersection graph of strongly $k$-colorable hypergraph without multiple edges;
    
    4) there exists an injective mapping $\phi: V(G) \rightarrow \mathbb{N}^k$, $v \mapsto (\phi_1(v),\dots,\phi_k(v))$ such that $uv\not\in E(G)$ whenever $\phi_j(u) \ne \phi_j(v)$ for every $j=1,...,k$.
	
\end{theorem}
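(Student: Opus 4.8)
The plan is to prove the four statements equivalent by establishing the cycle $(1)\Rightarrow(4)\Rightarrow(3)\Rightarrow(2)\Rightarrow(1)$, guided throughout by the single observation that in a categorical product of complete graphs two vertices are adjacent precisely when their coordinate tuples differ in \emph{every} coordinate. Consequently, realizing $\overline{G}$ as an induced subgraph of $\prod_{j=1}^{k}K_{n_j}$ is the same as producing an injective labeling $\phi=(\phi_1,\dots,\phi_k):V(G)\to\mathbb{N}^k$ for which $uv\notin E(G)$ exactly when $\phi_j(u)\neq\phi_j(v)$ for all $j$; equivalently, $uv\in E(G)$ iff $u$ and $v$ agree in some coordinate. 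This biconditional ``edge $\Leftrightarrow$ shared coordinate'' is the common content of all four conditions, and keeping it exact is the whole game.

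For $(1)\Rightarrow(4)$ I would simply unwind the definition of $dim_P(\overline{G})\le k$: an induced embedding of $\overline{G}$ into a categorical product of $k$ complete graphs is an injective $\phi$ whose coordinatewise-difference pattern reproduces the adjacency of $\overline{G}$, which is the labeling required by $(4)$. For $(4)\Rightarrow(3)$ I would build the hypergraph $\mathcal{H}$ explicitly: take as vertices the pairs $(j,a)$ with $j\in\{1,\dots,k\}$ and $a$ in the range of $\phi_j$, and for each $v\in V(G)$ form the hyperedge $E_v=\{(j,\phi_j(v)):j=1,\dots,k\}$. Then $E_u\cap E_v\neq\emptyset$ iff $u,v$ agree in some coordinate iff $uv\in E(G)$, so $L(\mathcal{H})\cong G$; colouring $(j,a)$ by its first coordinate $j$ gives a strong $k$-colouring since each $E_v$ meets each colour class exactly once, and injectivity of $\phi$ forces the $E_v$ to be pairwise distinct, i.e.\ no multiple edges.

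For $(3)\Rightarrow(2)$, let $c$ be a strong $k$-colouring of $\mathcal{H}$ with $G=L(\mathcal{H})$. For each colour $i$ I would define a spanning subgraph $M_i$ of $G$ by joining $v_E$ and $v_F$ whenever $E$ and $F$ share a vertex of colour $i$. Because each hyperedge contains at most one vertex of a given colour, the classes ``all hyperedges through a fixed colour-$i$ vertex $w$'' are pairwise disjoint cliques, so $M_i$ is an equivalence graph, and any $M_i$-edge witnesses a nonempty intersection and hence is an edge of $G$. These cover $E(G)$ (an edge $v_Ev_F$ gives $w\in E\cap F$, covered in colour $c(w)$) and they separate (if $E\neq F$, a vertex $w$ in exactly one of them places $v_E,v_F$ in different components of $M_{c(w)}$); the absence of multiple edges is exactly what guarantees $E\neq F$ for distinct vertices. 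Finally $(2)\Rightarrow(1)$: reading each $M_i$ as a partition $P_i$ of $V(G)$ into cliques of $G$ and letting $\phi_i(v)$ name the part of $v$, separation makes $\phi$ injective, while the covering property together with the fact that parts are cliques yields the full biconditional $uv\in E(G)\Leftrightarrow\exists i\,\phi_i(u)=\phi_i(v)$, i.e.\ an induced embedding of $\overline{G}$ into $\prod_i K_{|P_i|}$.

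I expect the main obstacle to be bookkeeping the exactness of this biconditional at each transfer, rather than any single hard idea. The delicate point is the reverse implication ``shared coordinate $\Rightarrow$ edge'': it is precisely the requirement that every connected component of each equivalence subgraph $M_i$ be a clique \emph{of} $G$ (so that non-adjacent vertices are never forced into a common part) that upgrades the bare covering condition to the induced-subgraph condition of $(1)$. Symmetrically, in $(4)\Rightarrow(3)$ and $(3)\Rightarrow(2)$ it is strong colourability (one colour per coordinate, one colour-$i$ vertex per hyperedge) that keeps the correspondence one-to-one, and the no-multiple-edges hypothesis that supplies injectivity and hence separation. Verifying these matching conditions carefully, so that neither spurious adjacencies nor spurious non-adjacencies are introduced, is where the care is needed.
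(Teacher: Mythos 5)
The paper never proves Theorem~\ref{thm:pdimcharact}: it is imported from \cite{hell2004graphs,babaits1996kmern} as a known result, so there is no in-paper argument to compare yours against, and I can only judge your proof on its own terms. On those terms, your cycle $(1)\Rightarrow(4)\Rightarrow(3)\Rightarrow(2)\Rightarrow(1)$ is the standard argument, and each construction is set up and verified correctly: the coordinate hypergraph with vertices $(j,a)$ and hyperedges $E_v=\{(j,\phi_j(v))\}$, the equivalence subgraphs $M_i$ built from colour classes (strong colourability is exactly what makes the cliques through distinct colour-$i$ vertices disjoint, and no multiple edges is exactly what gives separation), and the component-labelling map in $(2)\Rightarrow(1)$, where you rightly stress that components of each $M_i$ are cliques \emph{of} $G$, which is what forces non-adjacent vertices to differ in every coordinate.

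There is, however, one genuine discrepancy you should have caught. Condition (4) as stated in the theorem is one-directional: it only demands that $\phi_j(u)\ne\phi_j(v)$ for every $j$ implies $uv\notin E(G)$, i.e.\ that every edge shares a coordinate. In your preamble you quietly replace this by ``exactly when,'' adding the converse (distinct non-adjacent vertices differ in \emph{every} coordinate), and your step $(4)\Rightarrow(3)$ uses precisely that added direction: the claim $E_u\cap E_v\ne\emptyset \Leftrightarrow uv\in E(G)$ needs ``shared coordinate $\Rightarrow$ edge,'' which the literal (4) does not supply. This is not cosmetic, because under the literal reading (4) is strictly weaker than (1): for $G=C_5$ with vertices $u_1,\dots,u_5$ in cyclic order, the injective map $\phi(u_1)=(1,2)$, $\phi(u_2)=(1,3)$, $\phi(u_3)=(1,1)$, $\phi(u_4)=(2,1)$, $\phi(u_5)=(2,2)$ satisfies the literal condition (4) with $k=2$ (every edge agrees in some coordinate, and every pair differing in both coordinates is a non-edge), yet $dim_P(\overline{C_5})=\chi'(C_5)=3$; indeed, an equivalence subgraph of $C_5$ is a matching with at most two edges, so no two of them can cover all five edges, and by the (correct) equivalence $(1)\Leftrightarrow(2)$ no embedding in a product of two complete graphs exists. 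So the theorem is true only with the biconditional form of (4); your proof establishes that intended version, but a careful write-up must say so explicitly --- either correct the statement of (4) or note that, as literally written, it cannot be proved because it is false. Apart from this point, the proof is complete.
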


Numbers of vertices and edges of a graph $G$ are denoted by $n$ and $m$, respectively. A subgraph of $G$ induced by vertex subset $U\subseteq V(G)$ is denoted as $G[U]$. For two graphs $G_1$ and $G_2$ the notation $G_1 \leq G_2$ indicates, that $G_1$ is an induced subgraph of $G_2$. 

\section{Lebesgue dimension of graphs}\label{lebgraph}

Lebesgue dimension of a metric space is defined through $k$-covers by sets of arbitrary small diameter. It is natural to transfer this definition to graphs using graph $k$-covers by subgraphs of smallest possible diameter, i.e. by cliques. Thus by Theorem \ref{thm:covervsinter} we define Lebesgue dimension of a graph through its rank dimension: 

\begin{equation}\label{kdimeqleb}
dim_L(G) = dim_R(G)-1.
\end{equation}

An analogy between Lebesgue and rank dimensions is futher justified by the following Proposition \ref{topspacehyper}, that states that any compact metric spaces of bounded Lebesgue measure could be approximated by intersection graphs of (infinite) hypergraphs of bounded rank. To prove it, we will use the following fact:

\begin{lemma}\label{lebnumber}\cite{edgar2007measure}
Let $X$ be a compact metric space and $\mathcal{U}$ be its open cover. Then there exists $\delta>0$ (called a {\it Lebesgue number} of $\mathcal{U}$) such that for every subset $A\subseteq X$ with $diam(A) < \delta$ there is a set $U\in \mathcal{U}$ such that $A\subseteq U$.
\end{lemma}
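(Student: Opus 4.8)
The plan is to extract the Lebesgue number from the cover by combining the openness of the members of $\mathcal{U}$ with the compactness of $X$. First I would use openness: for every point $x \in X$ there is a member $U_x \in \mathcal{U}$ containing $x$, and since $U_x$ is open there is a radius $r_x > 0$ with the open ball $B(x, r_x) \subseteq U_x$. The collection of shrunken balls $\{B(x, r_x/2) : x \in X\}$ is then itself an open cover of $X$, so compactness lets me pass to a finite subcover $B(x_1, r_1/2), \dots, B(x_n, r_n/2)$, where $r_i = r_{x_i}$. The candidate Lebesgue number is $\delta = \min_{1 \le i \le n} r_i / 2$, which is strictly positive precisely because the minimum is taken over finitely many positive numbers.

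To verify that this $\delta$ works, I would take any $A \subseteq X$ with $diam(A) < \delta$; I may assume $A \ne \emptyset$ and pick a point $a \in A$. Since the chosen balls cover $X$, there is an index $i$ with $a \in B(x_i, r_i/2)$, i.e. $d(x_i, a) < r_i/2$. For an arbitrary $y \in A$ the triangle inequality gives $d(x_i, y) \le d(x_i, a) + d(a, y) < r_i/2 + diam(A) < r_i/2 + r_i/2 = r_i$, so $y \in B(x_i, r_i) \subseteq U_{x_i}$. As $y$ was arbitrary, $A \subseteq U_{x_i} \in \mathcal{U}$, which is exactly the required property.

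The only real obstacle is that the naive per-point choice of radius is not uniform: the assignment $x \mapsto r_x$ need not be bounded below on an infinite space, so without compactness there is no single $\delta$ serving every point. Compactness (passage to a finite subcover) is exactly what converts the pointwise radii into a uniform positive bound, and the halving of the radii is the device that makes the triangle-inequality estimate close, simultaneously absorbing the displacement $d(x_i,a) < r_i/2$ of the witnessing center and the diameter $diam(A) < r_i/2$. An alternative route I could pursue, avoiding the halving trick, is to fix a finite subcover $U_1, \dots, U_n$, set $f(x) = \max_{1 \le i \le n} d(x, X \setminus U_i)$, note that $f$ is continuous and strictly positive (each $x$ lies in some $U_i$, forcing $d(x, X\setminus U_i)>0$), and let $\delta = \min_{x \in X} f(x) > 0$ by the extreme value theorem on the compact space $X$; any set of diameter less than this $\delta$ then fits inside a ball about one of its points contained in the corresponding $U_i$.
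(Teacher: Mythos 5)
Your proof is correct: the standard halving-of-radii argument (pick $B(x,r_x)\subseteq U_x$ by openness, extract a finite subcover of the balls $B(x,r_x/2)$ by compactness, set $\delta=\min_i r_i/2$, and close the estimate with the triangle inequality using $d(x_i,a)<r_i/2$ and $d(a,y)\le diam(A)<\delta\le r_i/2$) is exactly the classical proof of the Lebesgue number lemma, and your alternative via the continuous function $f(x)=\max_i d(x,X\setminus U_i)$ is also valid. Note that the paper itself gives no proof of this statement --- it is quoted as a known result from the cited reference --- so your argument supplies a proof where the paper relies on the literature.
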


\begin{theorem}\label{topspacehyper}
	Let $X$ be a compact metric space with a metric $\rho$. Then $dim_L(X) \leq k-1$ if and only if for any $\epsilon > 0$ there exists a number $0 < \delta < \epsilon$ and a hypergraph $\mathcal{H}(\epsilon)$ on a finite vertex set $V(\mathcal{H}(\epsilon))$ with an edge set $E(\mathcal{H}(\epsilon)) = \{e_x : x\in X\}$ with the following properties: 
	
	1) $rank(\mathcal{H}(\epsilon)) \leq k$;
	
	2) $e_x\cap e_y \ne \emptyset$ for every $x,y\in X$ such that $\rho(x,y) < \delta$;
	
	3) $\rho(x,y) < \epsilon$ for every $x,y\in X$ such that $e_x \cap e_y \ne \emptyset$;
	
	4) for every $v\in V(\mathcal{H}(\epsilon))$ the set $X_v = \{x\in X : v\in e_x \}$ is open.
\end{theorem}

\begin{proof}

The proof borrows some ideas from intersection graphs theory (see \cite{berge1984hypergraphs}). Suppose that $dim_L(X) \leq k$, $\epsilon > 0$ and let $\mathcal{C}$ be the corresponding $(\epsilon,k)$-cover of $X$. Since $X$ is compact, we can assume that $\mathcal{C}$ is finite, i.e. $\mathcal{C} = \{C_1,...,C_m\}$. Let $\delta$ be the Lebesgue number of $\mathcal{C}$.

For a point $x\in X$ let $e_x = \{i\in [m] : x\in C_i\}$. Consider a hypergraph $\mathcal{H}$ with $V(\mathcal{H}) = [m]$ and $E(\mathcal{H}) = \{e_x : x\in X\}$. Then $\mathcal{H}$ satisfies conditions 1)-4). Indeed, $rank(\mathcal{H}) \leq k$, since $\mathcal{C}$ is $k$-cover. If $\rho(x,y) < \delta$, then by Lemma \ref{lebnumber} there is $i\in [m]$ such that $\{x,y\}\in C_i$, i.e. $i\in e_x\cap e_y$. Condition $j\in e_x\cap e_y$ means that $x,y\in C_j$,  and so $\rho(x,y) < \epsilon$, since $diam(C_j) < \epsilon$. Finally, for every $i\in V(\mathcal{H})$ we have $X_v = C_v$, and thus $X_v$ is open.

Conversely, let $\mathcal{H}$ be a hypergraph with $V(\mathcal{H}) = [m]$ satisfying conditions (1)-(4). Then it is straightforward to checked, that $\mathcal{C} = \{X_1,...,X_m\}$ is an open $(\epsilon,k)$-cover of $X$.
\end{proof}

 So, $dim_L(X)\leq k$ whenever for any $\epsilon > 0$ there is a well-defined hypergraph $\mathcal{H}(\epsilon)$ of $rank(\mathcal{H}(\epsilon)) \leq k$ with edges in bijective correspondence with points of $X$ such that two points are close if and only if corresponding edges intersect. 

\section{Graph measure and Hausdorff dimension of graphs}\label{measuregraph}

In order to rigorously define a graph analogue of Hausdorff dimension, we need to define first a corresponding measure. Note that in any meaningful finite graph topology every set is a Borel set. As mentioned above, for measurable Borel sets in $\mathbb{R}^n$ Jordan, Lebesgue and Hausdorff measures are equivalent.  Thus further we will work with a graph analogue of Jordan measure.

It is known, that every graph is isomorphic to an induced subgraph of a categorical product of complete graphs \cite{hell2004graphs}. Consider a graph $G$ embedded into a categorical product of $d$ complete graphs $K^1_{n_1}\times\dots \times K^d_{n_d}$. Without loss of generality we may assume that $n_1 = ... = n_d = n$, i.e.

\begin{equation}\label{graphembed}
G\cong G' \leq S = (K_n)^d,
\end{equation}

Suppose that $V(K^n) = \{1,\dots,n\}$. The graph $S$ will be referred to as {\it a space} of dimension $d$ and $G'$ as an embedding of $G$ into $S$. It is easy to see, that by definition every vertex $v\in S$ is a vector $v = (v_1,...,v_d)$ with $v_i\in [n]$, and two vertices $u$ and $v$ are adjacent in $S$ if and only if $v_r\ne u_r$ for every $r\in [d]$.

{\it Hyper-rectangle} $R = R(J_1,...,J_d)$ is a subgraph of $S$, that is defined as follows: for every $i=1,...,d$ choose a non-empty subset $J_i \subseteq [n]$, then $R = K_n[J_1]\times\dots\times K_n[J_d]$. The {\it volume} of a hyper-rectangle $R$ is the value $vol(R) = |V(R)| = \prod_{i=1}^d |J_i|$.

The family $\mathcal{R} = \{R^1,...,R^m\}$ of hyper-rectangles is a {\it rectangle co-cover} of $G'$, if the subgraphs $R^i$ are pairwise vertex-disjoint, $V(G')\subseteq \bigcup_{i=1}^m V(R^i)$ and $\mathcal{R}$ covers all non-edges of $G'$, i.e. for every $x,y\in V(G')$, $xy\not\in E(G')$ there exists $j\in[m]$ such that $x,y\in V(R^j)$. We define {\it $d$- volume} of a graph $G$ as

\begin{equation}\label{grvol}
vol^d(G) = \min_{G'}\min_{\mathcal{R}} \sum_{R\in \mathcal{R}} vol(R),
\end{equation}

where the first minimum is taken over all embeddings $G'$ of $G$ into $d$-dimensional spaces $S$ and the second minimum - over all rectangle co-covers of $G'$ (see Fig. \ref{fig:embedP4}). 

\begin{figure}[h]
\begin{center}
\includegraphics[width=80mm,height=80mm,keepaspectratio] {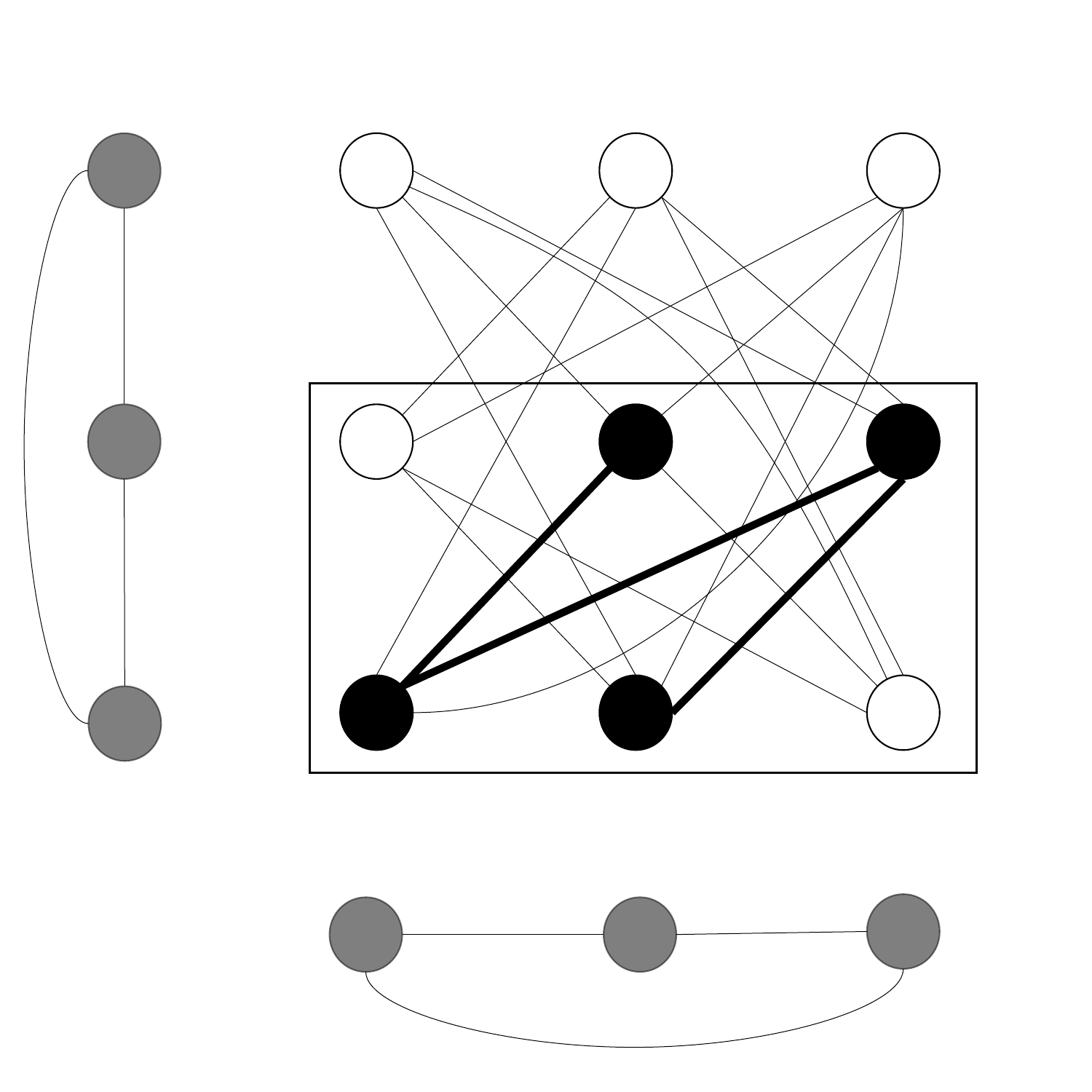}
\caption{\label{fig:embedP4} {\small Embedding of $G=P_4$ into a 2-dimensional space $S=(K_3)^2$ and its rectangle co-cover by a hyper-rectangle of volume $6$}}
\end{center}
\end{figure}

We define a {\it $d$- measure} of a graph $F$ as follows:

\begin{equation}\label{neasuregraph}
\mathcal{H}^d(F) = \left \{
\begin{array}{lll}
vol^d(\overline{F}), \text{ if } \overline{F} \text{ can be represented as (\ref{graphembed})};
\\ +\infty, \text{ otherwise }
\end{array}
\right.
\end{equation}

In the remaining part of this section we will prove, that $\mathcal{H}^d$ indeed satisfy the property of a measure, i.e.
$\mathcal{H}^d(F^1\cup F^2) = \mathcal{H}^d(F^1) + \mathcal{H}^d(F^2)$, where $F^1\cup F^2$ is a disjoint union of graphs $F^1$ and $F^2$.

Let $W^1,W^2\subseteq V(S)$. We write $W_1\sim W_2$, if every vertex from $W_1$ is adjacent to every vertex from $W_2$. Denote by $P_k(W_1)$ {\it the $k$-th projection} of $W_1$, i.e. the set of all $k$-coordinates of vertices of $W_1$: $$J_k(W_1) = \{v_k : v\in W_1\}.$$
In particular, $P_k(R(J_1,...,J_d)) = J_k$.

The following proposition follows directly from the definition of $S$

\begin{proposition}\label{padjcoord}
$W^1 \sim W^2$ if and only if $P_k(W_1)\cap P_k(W_2) = \emptyset$ for every $k\in [d]$.
\end{proposition}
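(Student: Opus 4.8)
The plan is to prove the biconditional by directly unwinding the adjacency rule in $S = (K_n)^d$: recall that vertices $u = (u_1,\dots,u_d)$ and $v = (v_1,\dots,v_d)$ are adjacent in $S$ exactly when $u_r \ne v_r$ for every $r \in [d]$. Both implications reduce to a routine manipulation of quantifiers over coordinates, using only the definition of $\sim$ and of the projections $P_k$, so I do not expect to invoke any earlier result beyond the description of $S$.

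First I would handle the ``if'' direction. Assuming $P_k(W^1) \cap P_k(W^2) = \emptyset$ for every $k \in [d]$, I fix arbitrary $u \in W^1$ and $v \in W^2$. For each coordinate $k$ we have $u_k \in P_k(W^1)$ and $v_k \in P_k(W^2)$ by the definition of the projection; disjointness of the two projections then forces $u_k \ne v_k$. Since this holds for every $k$, the vertices $u$ and $v$ differ in all coordinates and are therefore adjacent in $S$. As $u$ and $v$ were arbitrary, every vertex of $W^1$ is adjacent to every vertex of $W^2$, i.e. $W^1 \sim W^2$.

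For the ``only if'' direction I would argue by contraposition. If $P_k(W^1) \cap P_k(W^2) \ne \emptyset$ for some coordinate $k$, I pick a common value $c$ in this intersection; then there exist $u \in W^1$ and $v \in W^2$ with $u_k = c = v_k$. These two vertices agree in the $k$-th coordinate, so they fail the adjacency criterion in $S$ and are non-adjacent, contradicting $W^1 \sim W^2$. Hence $W^1 \sim W^2$ implies the coordinatewise disjointness of all projections.

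There is no genuine obstacle here, since the statement is, as noted, an immediate consequence of the definition of $S$; the only subtlety is to track the quantifier order correctly (``for all coordinates'' versus ``there exists a coordinate''). It is worth a brief sanity check on the degenerate cases: if either set is empty then both sides hold trivially (a vacuously true $\sim$ and an empty projection), and if $W^1$ and $W^2$ share a vertex $w$ then $w_k$ lies in both projections for every $k$, while $w$ cannot be adjacent to itself, so both sides fail simultaneously. These checks confirm the equivalence is consistent in all cases.
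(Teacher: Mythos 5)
Your proof is correct and matches the paper's approach: the paper states the proposition ``follows directly from the definition of $S$'' and omits the details, and your argument is precisely that unwinding of the adjacency rule in $(K_n)^d$, carried out carefully in both directions (including the degenerate overlapping-vertex case, which the paper leaves implicit).
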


Assume that $\mathcal{R} = \{R^1,...,R^m\}$ is a minimal rectangle co-cover of a minimal embedding $G'$, i.e.  $vol^d(G) = \sum_{R\in \mathcal{R}} vol(R)$.  Further we will demonstrate, that $\mathcal{R}$ has a rather simple structure.

Let $J_k^i = P_k(R^i)$.

\begin{proposition}\label{dvolumeinterindex}
 $J^i_k \cap J^j_k = \emptyset$ for every $i,j\in [m]$, $i\ne j$ and every $k\in [d]$ .
\end{proposition}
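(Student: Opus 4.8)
The plan is to exploit the minimality of $\mathcal{R}$ twice: first to pin down the exact shape of each rectangle as the ``bounding box'' of the vertices of $G'$ it contains, and then to invoke Proposition \ref{padjcoord}. Throughout I would set $U_i = V(R^i)\cap V(G')$, so that $P_k(U_i)\subseteq P_k(R^i)=J^i_k$.

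First I would record the structural consequence of the co-cover axioms. Since the rectangles are pairwise vertex-disjoint, every vertex of $G'$ lies in exactly one $R^i$, hence $\bigcup_i U_i = V(G')$ and the $U_i$ are disjoint. Now take $x\in U_i$, $y\in U_j$ with $i\ne j$, and suppose $xy\not\in E(G')$. The co-cover axiom demands a rectangle containing both $x$ and $y$; but the unique rectangle containing $x$ is $R^i$ and the unique one containing $y$ is $R^j$, so this rectangle would be both $R^i$ and $R^j$, a contradiction. Therefore every vertex of $U_i$ is adjacent in $G'$, and hence in $S$ (as $G'\le S$ is induced), to every vertex of $U_j$, i.e. $U_i\sim U_j$. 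By the forward direction of Proposition \ref{padjcoord} this already yields $P_k(U_i)\cap P_k(U_j)=\emptyset$ for all $k$.

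Next I would argue that minimality forces $J^i_k = P_k(U_i)$. If some $U_i$ were empty, then $R^i$ would cover no vertex of $G'$ and, since both endpoints of any non-edge lie in $V(G')$, no non-edge of $G'$ either; deleting it would give a co-cover of strictly smaller volume, contradicting minimality. Hence each $U_i\ne\emptyset$ and each $P_k(U_i)\ne\emptyset$, so $R(P_1(U_i),\dots,P_d(U_i))$ is a legitimate hyper-rectangle. Replacing every $R^i$ by this bounding box gives a family that is still pairwise vertex-disjoint, still contains each $U_i$ (hence covers $V(G')=\bigcup_i U_i$), and still covers every non-edge, since by the previous paragraph both endpoints of each non-edge lie in a single $U_i$. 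Its total volume is $\sum_i\prod_k |P_k(U_i)|\le \sum_i\prod_k |J^i_k| = vol^d(G)$; minimality forces equality, and because $|P_k(U_i)|\le |J^i_k|$ termwise with all factors positive, equality propagates to $P_k(U_i)=J^i_k$ for every $i$ and $k$.

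Combining the two steps gives $J^i_k\cap J^j_k = P_k(U_i)\cap P_k(U_j)=\emptyset$ for all $i\ne j$ and all $k$, as claimed. I expect the main obstacle to be the verification in the shrinking step that the bounding-box family is genuinely a co-cover; the crucial point that unlocks it is that vertex-disjointness makes the covering of non-edges \emph{local} to a single rectangle, which is precisely what guarantees that both the vertex-covering and the non-edge-covering conditions survive the replacement.
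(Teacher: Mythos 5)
Your proof is correct, but it assembles the argument differently from the paper's. Both proofs rest on the same two pillars: (i) minimality forces each $R^i$ to coincide with the bounding box of $U_i=V(R^i)\cap V(G')$, i.e.\ $J^i_k=P_k(U_i)$ for all $k$; and (ii) vertex-disjointness plus the non-edge-covering axiom force any two vertices of $G'$ lying in different rectangles to be adjacent. The paper proves (i) first and locally: an index $l\in J^i_k$ not witnessed by any vertex of $U_i$ lets one delete $l$ from $J^i_k$ (or delete $R^i$ altogether when $|J^i_k|=1$), yielding a co-cover of strictly smaller volume; it then concludes directly, without invoking Proposition \ref{padjcoord}: a common index $l\in J^i_k\cap J^j_k$ yields witnesses $u\in U_i$, $v\in U_j$ with $u_k=v_k$, hence a non-edge of $G'$, and the rectangle required to cover it would meet both $R^i$ and $R^j$, contradicting disjointness. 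You instead prove (ii) first, convert it into disjointness of the projections $P_k(U_i)$ via Proposition \ref{padjcoord}, and then obtain (i) by a global move: replace every $R^i$ by its bounding box and propagate equality through the volume sum (positive factors, termwise inequalities). Both routes are sound; the paper's local perturbation is leaner and keeps (i) logically independent of (ii), whereas your tightness step genuinely needs (ii) beforehand (to verify that the bounding-box family still covers all non-edges). In exchange, your version nails all coordinates of all rectangles in one arithmetic stroke and makes explicit the bounding-box structure that the paper only extracts later, in Corollary \ref{voljoin}.
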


\begin{proof}
First note, that for every hyper-rectangle $R^i = R(J^i_1,...,J^i_d)$, every coordinate $k\in [d]$ and every $l\in J^i_k$ there exists $v\in V(G')\cap V(R^i)$ such that $v_k = l$. Indeed, suppose that it does not hold for some $l\in J^i_k$. If $|J_k^i| = 1$, then it means that $V(G')\cap V(R^i) = \emptyset$. Thus, $\mathcal{R}' = \mathcal{R}\setminus \{R^i\}$ is a rectangle co-cover, which contradicts the minimality of $\mathcal{R}$. If $|J_k^i| > 1$,  consider a hyper-rectangle  $(R^i)' = R(J_i^1,...,J_k^i\setminus \{l\},...,J^i_d)$. The set $\mathcal{R}' = \mathcal{R}\setminus \{R^i\}\cup \{(R^i)'\}$  is a rectangle co-cover, and the $d$-volume of $(R^i)'$ is smaller than the $d$-volume of $R^i$. Again it contradicts minimality of $\mathcal{R}$.

Now assume that for some distinct $i,j\in [m]$ and $k\in [d]$ we have $J^i_k \cap J^j_k \supseteq \{l\}$. Then there exist $u\in V(G')\cap V(R^i)$ and $v\in V(G')\cap V(R^j)$ such that $u_k = v_k = l$. So, $uv\not\in E(G')$, and therefore by the definition $uv$ is covered by some $R^h\in \mathcal{R}$. The hyper-rectangle $R^h$ intersects both $R^i$ and $R^j$, which contradicts the definition of a rectangle co-cover.
\end{proof}

\begin{proposition}\label{dvolumecoconcomp}
Let $U^i = V(G')\cap V(R^i)$, $i=1,...,m$. Then the set $U=\{U^1,...,U^m\}$ coincides with the set of co-connected components of $G'$.
\end{proposition}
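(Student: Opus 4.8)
The plan is to prove the two inclusions separately and then combine them. First I would show that no non-edge of $G'$ runs between two distinct sets $U^i$ and $U^j$, so that every co-connected component of $G'$ is contained in a single $U^i$. Conversely, I would show that each $U^i$ induces a connected subgraph of $\overline{G'}$, so that each $U^i$ is contained in a single co-connected component. Since the $R^i$ are pairwise disjoint and cover $V(G')$, the sets $U^i$ partition $V(G')$, and each $U^i$ is non-empty by the same minimality argument used in Proposition \ref{dvolumeinterindex} (an empty $U^i$ would let us delete $R^i$). These facts together identify $\{U^1,\dots,U^m\}$ with the connected components of $\overline{G'}$, i.e. the co-connected components of $G'$.

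For the first direction I would combine Propositions \ref{padjcoord} and \ref{dvolumeinterindex}. Since $P_k(V(R^i)) = J_k^i$ and, by Proposition \ref{dvolumeinterindex}, $J_k^i \cap J_k^j = \emptyset$ for all $k$ whenever $i \ne j$, Proposition \ref{padjcoord} yields $V(R^i) \sim V(R^j)$: every vertex of $R^i$ is adjacent in $S$, hence in the induced subgraph $G'$, to every vertex of $R^j$. In particular every vertex of $U^i$ is adjacent to every vertex of $U^j$, so $\overline{G'}$ has no edge between $U^i$ and $U^j$, and every co-connected component lies inside a single $U^i$.

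The second direction is the crux, and I expect it to be the main obstacle. Suppose for contradiction that some $\overline{G'}[U^i]$ is disconnected, so $U^i = A \sqcup B$ with $A,B$ non-empty and no $\overline{G'}$-edge between them; this means $A \sim B$ in $G'$, and by Proposition \ref{padjcoord} $P_k(A) \cap P_k(B) = \emptyset$ for every $k$. Writing $A^k = P_k(A)$, $B^k = P_k(B)$, I would note, exactly as in the proof of Proposition \ref{dvolumeinterindex}, that every coordinate of $J_k^i$ is attained by a vertex of $U^i = A \cup B$, whence $J_k^i = A^k \sqcup B^k$ and $|J_k^i| = |A^k| + |B^k|$. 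I would then replace $R^i$ by $R_A = R(A^1,\dots,A^d)$ and $R_B = R(B^1,\dots,B^d)$ and argue that $\mathcal{R}' = (\mathcal{R}\setminus\{R^i\}) \cup \{R_A,R_B\}$ is again a rectangle co-cover of strictly smaller volume, contradicting minimality.

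Verifying that $\mathcal{R}'$ is a valid co-cover is routine but needs care: $V(R_A),V(R_B) \subseteq V(R^i)$, the coordinatewise disjointness $A^k \cap B^k = \emptyset$ makes $R_A$ and $R_B$ vertex-disjoint from each other and still disjoint from the remaining rectangles, and $A \subseteq V(R_A)$, $B \subseteq V(R_B)$ keeps all vertices of $G'$ covered; any non-edge previously covered by $R^i$ has both endpoints in $U^i$, and since $A \sim B$ both endpoints lie in the same part, hence in $R_A$ or in $R_B$. For the volumes, $|J_k^i| = |A^k|+|B^k|$ with all $|A^k|,|B^k| \ge 1$ gives $vol(R^i) = \prod_k(|A^k|+|B^k|) > \prod_k |A^k| + \prod_k |B^k| = vol(R_A)+vol(R_B)$, the strict inequality coming from the positive cross terms when $d \ge 2$; the one-dimensional case, where $S$ is a single clique, is degenerate and can be treated directly. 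This contradiction shows each $U^i$ is co-connected, which together with the first direction completes the proof.
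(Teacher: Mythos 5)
Your proof is correct and takes essentially the same route as the paper's: both directions match — you use Propositions \ref{padjcoord} and \ref{dvolumeinterindex} to get complete adjacency between distinct $U^i$'s (so no co-connected component crosses rectangles), and you contradict minimality by splitting the rectangle of a co-disconnected $U^i$ along coordinate-disjoint projections, the only cosmetic difference being that you split into two sides of a disconnection while the paper splits into all $q$ co-components at once. If anything your write-up is slightly more careful than the paper's, which asserts the strict volume drop via $V(R^{i,1})\sqcup\dots\sqcup V(R^{i,q}) \subsetneq V(R^i)$ without observing that strictness fails when $d=1$ — the degenerate case you explicitly flag.
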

\begin{proof}
Propositions \ref{padjcoord} and \ref{dvolumeinterindex} imply, that vertices of distinct hyper-rectangles from the co-cover $\mathcal{R}$ are pairwise adjacent. So, $U_i \sim U_j$ for every $i,j\in [m]$, $i\ne j$.

Let $\mathcal{C}=\{C^1,...,C^r\}$ be the set of co-connected components of $G'$ (thus $V(G') = \bigsqcup_{l=1}^r C^l = \bigsqcup_{i=1}^m U^i$). Consider a component $C^l\in \mathcal{C}$ and the sets $C^l_i = C^l \cap U^i$, $i=1,...,m$. We have $C^l = \bigsqcup_{i=1}^m C^l_i$ and $C^l_i \sim C^l_j$ for all $i\ne j$. Therefore, due to co-connectedness of $C^l$, exactly one of the sets $C^l_i$ is non-empty.

So, we have demonstrated, that every co-connected component $C^l$ is contained in some of the sets $U^i$. Now, let some $U_i$ consists of several components, i.e. without loss of generality $U_i = C^1\sqcup\dots\sqcup C^q$, $q\geq 2$. Let $I_k^j = P_k(C^j)$, $j=1,...,q$. By Proposition \ref{padjcoord} we have $I_k^{j_1} \cap I_k^{j_2} = \emptyset$ for all $j_1\ne j_2$, $k=1,...,d$.  Consider hyper-rectangles $R^{i,1} = R(I_1^1,...,I_d^1)$,...,$R^{i,q} = R(I_1^q,...,I_d^q)$. Those hyper-rectangles are pairwise vertex-disjoint, and $C^j \subseteq V(R^{i,j})$ for all $j\in [q]$. Since every pair of non-connected vertices of $G'$ is contained in some of its co-connected components, we arrived to the conclusion, that the set $\mathcal{R}' = \mathcal{R}\setminus \{R^i\} \cup \{R^{i,1},...,R^{i,q}\}$ is a rectangle co-cover. Moreover, $V(R^{i,1})\sqcup\dots\sqcup V(R^{i,q}) \subsetneq V(R^i)$, and therefore $\sum_{j=1}^q vol(R^{i,j}) < vol(R^i)$, which contradicts the minimality of $\mathcal{R}$.
\end{proof}

\begin{corollary}\label{voljoin}
Let $\mathcal{C}=\{C^1,...,C^m\}$ be the set of co-connected components of $G'$. Then $R_i = R(P_1(C^i),...,P_d(C^i))\supseteq C^i$.
\end{corollary}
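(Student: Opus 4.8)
The plan is to observe that the asserted inclusion is an immediate consequence of the definitions of the projection operator $P_k$ and of a hyper-rectangle, so that almost no work is required beyond unwinding notation. Recall that the vertex set of $R_i = R(P_1(C^i),\dots,P_d(C^i))$ is by definition the Cartesian product $P_1(C^i)\times\dots\times P_d(C^i)$, i.e.\ the collection of all vectors $v=(v_1,\dots,v_d)$ with $v_k\in P_k(C^i)$ for every $k\in[d]$.

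First I would fix an arbitrary vertex $v\in C^i$ and check coordinate by coordinate that it lies in $V(R_i)$. For each $k\in[d]$ the value $v_k$ is, by the very definition $P_k(C^i)=\{u_k:u\in C^i\}$, a member of $P_k(C^i)$. Hence $v$ satisfies the membership condition for $V(R_i)$ in every coordinate, so $v\in V(R_i)$. Since $v$ was arbitrary, $C^i\subseteq V(R_i)$, which is exactly the claim $R_i\supseteq C^i$.

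The only point worth flagging is that this inclusion is typically strict and does not actually use co-connectedness: the product $P_1(C^i)\times\dots\times P_d(C^i)$ may contain vectors that are not vertices of $C^i$ and need not even belong to $V(G')$. Thus the genuine content is not the inclusion itself but its combination with the preceding propositions. In the setting of a minimal co-cover $\mathcal{R}=\{R^1,\dots,R^m\}$, the first paragraph of the proof of Proposition \ref{dvolumeinterindex} shows that every value in $P_k(R^i)$ is attained by some vertex of $U^i=V(G')\cap V(R^i)$; by Proposition \ref{dvolumecoconcomp} we have $U^i=C^i$, so $P_k(R^i)=P_k(C^i)$ for every $k$, and therefore $R_i=R^i$. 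I would record this identification as the real upshot of the corollary, since it permits replacing each rectangle of a minimal co-cover by the canonical rectangle spanned by the projections of the corresponding co-connected component. I anticipate no real obstacle: the inclusion is definitional, and the accompanying identity $R_i=R^i$ is a direct citation of facts already established in Propositions \ref{dvolumeinterindex} and \ref{dvolumecoconcomp}.
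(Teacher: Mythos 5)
Your proposal is correct, and your diagnosis of the statement is accurate: the inclusion $C^i \subseteq V(R_i)$ as literally written is purely definitional (each coordinate of a vertex of $C^i$ lies in the corresponding projection), and the substantive content --- which is what Corollary \ref{volconcomp} actually uses --- is the identification of the bounding box $R(P_1(C^i),\dots,P_d(C^i))$ with the rectangle $R^i$ of the minimal co-cover containing $C^i$. The paper's own proof is devoted exactly to this identification, but reaches it by a different route than yours: it re-invokes the minimality of $\mathcal{R}$, arguing that since every non-edge of $G'$ lies inside a co-connected component, $R^i$ could be replaced by any hyper-rectangle containing $C^i$ without destroying the co-cover property, so minimality of total volume forces $R^i$ to be the minimal hyper-rectangle containing $C^i$, i.e.\ the bounding box. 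You instead avoid any new minimality argument by citing the coordinate-attainment fact from the first paragraph of the proof of Proposition \ref{dvolumeinterindex} (every $l \in J^i_k$ is realized by a vertex of $U^i$), which together with $U^i = C^i$ from Proposition \ref{dvolumecoconcomp} gives $P_k(R^i) = P_k(C^i)$ for all $k$, hence $R^i = R_i$ outright. The two arguments rest on the same underlying principle (minimality forbids wasted coordinate values), but yours is a genuine shortcut: it turns the corollary into a pure consequence of already-established facts, at the mild stylistic cost of citing an intermediate step inside another proof rather than a stated proposition, whereas the paper repeats a small replacement argument to stay self-contained at the level of proposition statements.
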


\begin{proof}
By Proposition \ref{dvolumecoconcomp}, $|\mathcal{R}| = |\mathcal{C}|$, and every component $C^i\in \mathcal{C}$ is contained in a unique hyper-rectangle $R^i\in \mathcal{R}$, $i=1,...,m$. Every pair of non-adjacent vertices of $G'$ belong to some of its co-connected components. This fact, together with the minimality of $\mathcal{R}$, implies that $R^i$ is the minimal hyper-rectangle that contains $C^i$. Thus $R_i = R(P_1(C^i),...,P_d(C^i))$.
\end{proof}

\begin{corollary}\label{cor:minvolspace}
If $\overline{G}$ is connected, than $vol^d(G)$ is the minimal volume of $d$-dimensional space $S$, where $G$ can be embedded. 
\end{corollary}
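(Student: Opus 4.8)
The plan is to combine the structural description of minimal rectangle co-covers already obtained in Proposition \ref{dvolumecoconcomp} and Corollary \ref{voljoin} with the elementary observation that a hyper-rectangle is itself a $d$-dimensional space. Note first that those results rely only on the minimality of the co-cover for a given embedding (their proofs derive contradictions with the minimality of $\mathcal{R}$, never with the minimality of the embedding), so they apply to each embedding $G'$ of $G$ separately.

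First I would fix an embedding $G'$ of $G$. Since $\overline{G}$ is connected, so is $\overline{G'}$, hence $G'$ has a single co-connected component, namely all of $V(G')$. By Proposition \ref{dvolumecoconcomp} the hyper-rectangles of a minimal co-cover $\mathcal{R}$ are in bijection with the co-connected components, so $|\mathcal{R}| = 1$; by Corollary \ref{voljoin} the unique hyper-rectangle is the bounding box $R = R(P_1(V(G')), \dots, P_d(V(G')))$. Therefore $\min_{\mathcal{R}} \sum_{R' \in \mathcal{R}} vol(R') = vol(R) = \prod_{k=1}^d |P_k(V(G'))|$ for every such $G'$, and minimizing over embeddings yields $vol^d(G) = \min_{G'} \prod_{k=1}^d |P_k(V(G'))|$.

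Next I would identify this quantity with the minimal volume of a space containing $G$. The bounding box $R$, after relabeling the coordinate values actually used in each dimension, is precisely the product of complete graphs $K_{|P_1(V(G'))|} \times \dots \times K_{|P_d(V(G'))|}$, i.e. a $d$-dimensional space of volume $\prod_k |P_k(V(G'))|$ into which $G'$ embeds. Conversely, any space $S = K_{n_1} \times \dots \times K_{n_d}$ admitting an embedding of $G'$ must provide at least $|P_k(V(G'))|$ distinct values in coordinate $k$, so $|V(S)| = \prod_k n_k \geq \prod_k |P_k(V(G'))|$. Thus for each embedding the smallest space containing it is its own bounding box, and taking the minimum over all embeddings identifies $\min\{|V(S)| : G \leq S\}$ with $vol^d(G)$.

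The main (and essentially only) subtlety I anticipate lies in this second step: being precise that a \emph{space} is read as an arbitrary product of $d$ complete graphs with possibly unequal factor sizes — since the minimum over equal-factor spaces could in general exceed $\prod_k |P_k(V(G'))|$ — and verifying that the bounding box genuinely beats every competing embedding-plus-space pair rather than only the fixed one. Once the collapse of the minimal co-cover to the single bounding box is granted by the preceding propositions, the rest is a direct comparison of volumes.
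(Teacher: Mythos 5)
Your proof is correct and takes essentially the same route as the paper, which states this corollary without an explicit proof as an immediate consequence of Proposition~\ref{dvolumecoconcomp} and Corollary~\ref{voljoin}: a connected complement forces a single co-connected component, hence the minimal co-cover of any embedding collapses to its bounding box, which is itself a $d$-dimensional space of the same volume. Your two added observations---that those results use only the minimality of the co-cover for a fixed embedding, and that ``space'' must be read as a product of complete graphs of possibly unequal sizes (exactly how the paper later applies the corollary, writing $\overline{G}\leq K_{p_1}\times\dots\times K_{p_d}$ with $h=p_1\cdots p_d$)---are precisely the details the paper leaves implicit.
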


\begin{corollary}\label{volconcomp}
Let $\mathcal{D}=\{D^1,...,D^m\}$ be the set of co-connected components of $G$. Then $vol^d(G) = \sum_{i=1}^m vol^d(G[D^i])$.
\end{corollary}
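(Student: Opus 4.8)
The plan is to prove the two opposite inequalities and combine them. First I would record the structural fact underlying the whole argument: $G$ is the \emph{join} of its co-connected components. Indeed, if $u\in D^i$ and $v\in D^j$ with $i\ne j$, then $u$ and $v$ lie in distinct connected components of $\overline{G}$, so $uv$ cannot be a non-edge, i.e. $uv\in E(G)$. Hence distinct components are completely adjacent in $G$, and, equivalently, every non-edge of $G$ lies entirely within a single component $D^i$.

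For the inequality $vol^d(G)\le \sum_{i=1}^m vol^d(G[D^i])$, I would take for each $i$ an optimal embedding of $G[D^i]$ into a $d$-dimensional space together with a minimal rectangle co-cover realizing $vol^d(G[D^i])$. I would then relabel coordinates so that, in every coordinate $k\in[d]$, the label sets used by different components are pairwise disjoint; this is possible after enlarging the common alphabet $[n]$ to $\sum_i n_i$ symbols. By Proposition \ref{padjcoord}, vertices from different components then have disjoint projections in every coordinate and are therefore adjacent, which matches the join structure of $G$, while adjacencies within each component are preserved. Thus the combined labeling yields a valid embedding $G'$ of $G$. The union of the individual co-covers consists of pairwise vertex-disjoint hyper-rectangles (disjoint labels), covers all vertices, and covers every non-edge of $G$ (each such non-edge sits inside one component and is covered there); hence it is a rectangle co-cover of $G'$ of total volume $\sum_i vol^d(G[D^i])$, giving the bound.

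For the reverse inequality $vol^d(G)\ge\sum_{i=1}^m vol^d(G[D^i])$, I would start from a minimal embedding $G'$ equipped with a minimal rectangle co-cover $\mathcal{R}=\{R^1,\dots,R^m\}$ attaining $vol^d(G)$. By Proposition \ref{dvolumecoconcomp} the sets $U^i=V(G')\cap V(R^i)$ are exactly the co-connected components of $G'$, so $|\mathcal{R}|=m$ and, matching $U^i$ with $D^i$ through the isomorphism $G\cong G'$, each component corresponds to a single hyper-rectangle (cf. Corollary \ref{voljoin}). Since the $R^i$ are vertex-disjoint, every non-edge inside $U^i$ must be covered by $R^i$ alone; together with $U^i\subseteq V(R^i)$ this shows that the singleton family $\{R^i\}$, with the restricted embedding $G'[U^i]$, is a feasible embedding-plus-co-cover of $G[D^i]$. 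Therefore $vol^d(G[D^i])\le vol(R^i)$, and summing over $i$ yields $\sum_i vol^d(G[D^i])\le\sum_i vol(R^i)=vol^d(G)$.

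The main obstacle I anticipate lies in the upper bound: one must verify carefully that the coordinate-wise relabeling can be performed with a single common alphabet $[n]$ shared across all $d$ coordinates, while simultaneously keeping the components' label sets disjoint in each coordinate and preserving exactly the adjacency pattern prescribed by $G$. The lower bound, by contrast, is essentially immediate once Proposition \ref{dvolumecoconcomp} and Corollary \ref{voljoin} are invoked.
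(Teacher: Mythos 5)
Your proposal is correct and follows essentially the same route as the paper: the lower bound by restricting a minimal co-cover to each co-connected component via Proposition~\ref{dvolumecoconcomp} and Corollary~\ref{voljoin}, and the upper bound by shifting labels so that the components' coordinate sets are pairwise disjoint and taking the union of the individual co-covers (the paper implements your relabeling concern explicitly by translating each $J^i_k$ into disjoint blocks of an enlarged alphabet, exactly as you suggest).
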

\begin{proof}
Suppose that $\{C^1,...,C^m\}$ is the set of co-connected components of $G'$, and $G[D^i]\cong G'[C^i]$. Proposition \ref{dvolumecoconcomp} and Corollary \ref{voljoin} imply that $\{R_i\}$ is a rectangle co-cover of an embedding of $G[D^i]$.  Therefore we have $vol(R^i) \geq vol^d(G[D^i])$ and thus $vol^d(G) = \sum_{i=1}^m vol(R^i) \geq \sum_{i=1}^m vol^d(G[D^i])$.

Now let $G'^i$ be a minimal embedding of $G[D^i]$ into $(K_n)^d$.  By Proposition \ref{dvolumecoconcomp}, every minimal hyper-rectangle co-cover of $G'^i$ consists of a single hyper-rectangle $R^i = R(J^i_1,...,J^i_d)$ or, in other words, $G[D^i]$ is embedded into $R^i$. Now construct an embedding $G'$ of $G$ into $S=(K_{nm})^d$ and its hyper-rectangle co-cover as follows:

let $I^i_k = (i-1)m + J^i_k =  \{(i-1)m + l : l\in J^i_k\}$, $k=1,...,d$. Obviously, $Q^i = K_{mn}(I^i_1)\times\dots\times K_{mn}(I^i_d) \cong R^i$. Now embed $G[D^i]$ into $Q^i$. Let $G'^i$ be those embeddings. By Proposition \ref{padjcoord} $V(G'^i) \sim V(G'^j)$, so $G' = G'^1\cup...\cup G'^m$ is indeed embedding of $G$.

All $Q^i$ are pairwise disjoint. Since every pair of non-adjacent vertices belong to some $G[D^i]$, we have that $\mathcal{Q} = \{Q^1,...,Q^m\}$ is hyper-rectangle co-cover of $G'$. Therefore $vol^d(G) \leq \sum_{i=1}^m vol(Q^i) = \sum_{i=1}^m vol^d(G[D^i])$.
\end{proof}

\begin{theorem}\label{measureadditivity}
Let $F_1$ and $F_2$ be two graphs. Then 

\begin{equation}\label{eq:measaddit}
\mathcal{H}^d(F^1\cup F^2) = \mathcal{H}^d(F^1) + \mathcal{H}^d(F^2)
\end{equation}

\end{theorem}

\begin{proof}
It can be shown \cite{babai1992linear}, that $\overline{F^1\cup F^2}$ can be embedded into a categorical product of $d$ complete graphs if and only if both $\overline{F_1}$ and $\overline{F_2}$ have such embeddings. Therefore the relation (\ref{eq:measaddit}) holds, if one of its members is equal to $+\infty$.

If all $\overline{F_1}$,$\overline{F_2}$, $\overline{F^1\cup F^2}$ can be embedded into a product of $d$ complete graphs, then (\ref{eq:measaddit}) follows from Corollary \ref{volconcomp}.

\end{proof}

% Note, that if a graph is co-connected, then the rectangle covers not only edges of complement, but also edges of a graph.

Following the analogy with Hausdorff dimension of topological spaces (\ref{hdimtop}), we define a {\it Hausdorff dimension} of a graph $G$ as 

\begin{equation}\label{hdimgraph}
dim_H(G) = \min\{s \geq 0 :\mathcal{H}^s(G) < \infty\}-1.
\end{equation}

Thus, Hausdorff dimension of a graph can be identified with a Prague dimension of its complement minus 1.

\section{Relations with Kolmogorov complexity}

Let $\mathbb{B}^*$ be the set of all finite binary strings and $\Phi:\mathbb{B}^* \rightarrow \mathbb{B}^*$ be a computable function. A {\it Kolmogorov complexity} $K_{\Phi}(s)$ of a binary string $s$ with respect to $\Phi$ is defined as a minimal length of a string $s'$ such as $\Phi(s') = s$. Since Kolmogorov complexities with respect to any two functions differ only by an additive constant \cite{li2009Kolmogorov}, it is usually assumed that some canonical function $\Phi$ is fixed, and Kolmogorov complexity is denoted simply by $K(s)$. Thus, informally $K(s)$ could be described as a length of a shortest encoding of the string $s$, that allows to completely reconstruct it. Analogously, for two strings $s,t\in \mathbb{B}^*$, a {\it conditional Kolmogorov complexity} $K(s|t)$ is a a length of a shortest encoding of $s$, if $t$ is known in advance.  More information on properties of Kolmogorov complexity can be found in \cite{li2009Kolmogorov}.

Every graph $G$ can be naturally encoded using the string representation of an upper triangle of its adjacency matrix. Kolmogorov complexity of a graph could be defined as a Kolmogorov complexity of that string \cite{mowshowitz2012entropy,buhrman1999kolmogorov}.  It gives estimations $K(G) = O(n^2)$, $K(G|n) = O(n^2)$.  Alternatively, $n$-vertex connected labeled graph can be represented as a list of edges with ends of each edge encoded using their binary representations concatenated with a binary representation of $n$. It gives estimations $K(G) \leq 2m\log(n) + \log(n) = O(m\log(n))$, $K(G|n) \leq 2m\log(n) = O(m\log(n))$ \cite{li2009Kolmogorov,mowshowitz2012entropy}.

Further in this section for simplicity we will consider connected graphs (for disconnected graphs all considerations below could be applied to every connected component). Let $dim_H(G) = dim_P(\overline{G}) - 1 = d-1$ and $\mathcal{H}^d(G) = h$. Then by Corollary \ref{cor:minvolspace} $\overline{G}$ is an induced subgraph of a product 
\begin{equation}\label{graphembedmin}
K_{p_1}\times\dots \times K_{p_d},
\end{equation}

where $h = p_1\cdot\dots \cdot p_d$.

So, by Theorem \ref{thm:pdimcharact}, $G$ and $\overline{G}$ could be encoded using a collection of vectors $\phi(v) = (\phi_1(v),\dots,\phi_d(v))$, $v\in V(G)$, $\phi_j(v)\in [p_j]$. Such encoding could be stored as a string containing binary representations of coordinates $\phi_j(v)$ using $\log(p_j)$ bits concatenated with a binary representations of $n$ and $p_j$, $j=1,...,n$. The length of this string is $(n+1)\sum_{j=1}^d \log(p_j) + \log(n)$. Analogously, if $n$ and $p_j$ are given, then the length of encoding is $n\sum_{j=1}^d \log(p_j)$. Thus, the following estimation is true:

\begin{proposition} For any $G$,

\begin{equation}\label{eq:KolG}
K(G)\leq (n+1)\log(\mathcal{H}^d(G)) + \log(n)
\end{equation}

\begin{equation}\label{eq:KolcondG}
K(G|n,p_1,...,p_d)\leq n\log(\mathcal{H}^d(G))
\end{equation}

\end{proposition}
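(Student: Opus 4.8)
The plan is to convert the combinatorial embedding furnished by Corollary \ref{cor:minvolspace} and Theorem \ref{thm:pdimcharact} into an explicit binary string that encodes $G$, and then to bound the Kolmogorov complexity by the length of that string. First I would fix, as in the discussion preceding the statement, a minimal embedding realizing $dim_H(G) = d-1$ and $\mathcal{H}^d(G) = h$. By Corollary \ref{cor:minvolspace} this means $\overline{G}$ is an induced subgraph of $K_{p_1}\times\dots\times K_{p_d}$ with $h = p_1\cdots p_d$, so $dim_P(\overline{G}) \leq d$. Applying the equivalence (1)$\Leftrightarrow$(4) of Theorem \ref{thm:pdimcharact} to $\overline{G}$ then yields an injective map $\phi: V(G) \to [p_1]\times\dots\times[p_d]$, $v \mapsto (\phi_1(v),\dots,\phi_d(v))$, where each coordinate $\phi_j(v)$ ranges over $[p_j]$ because the $j$-th factor of the product has $p_j$ vertices.

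Next I would verify that this map determines $G$ completely, so that it is a faithful encoding. Since $\overline{G}$ sits as an \emph{induced} subgraph inside the categorical product, and two vertices of the product are adjacent exactly when they differ in every coordinate, we have $uv \in E(\overline{G})$ iff $\phi_j(u) \ne \phi_j(v)$ for all $j$; equivalently $uv \in E(G)$ iff $\phi_j(u) = \phi_j(v)$ for at least one coordinate $j$. Hence a decoder handed the list of vectors $\{\phi(v) : v \in V(G)\}$ recovers every adjacency of $G$ simply by comparing coordinates pairwise.

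It then remains to count the bits. Each coordinate $\phi_j(v)\in[p_j]$ is stored in $\log(p_j)$ bits, so one vertex costs $\sum_{j=1}^d \log(p_j)$ bits and the $n$ vertices together cost $n\sum_{j=1}^d \log(p_j)$ bits. Prepending the binary representations of $p_1,\dots,p_d$ (an extra $\sum_{j=1}^d \log(p_j)$ bits, playing the role of one additional ``row'') and of $n$ (another $\log(n)$ bits) gives a total of $(n+1)\sum_{j=1}^d \log(p_j) + \log(n)$ bits. Since $\sum_{j=1}^d \log(p_j) = \log(\prod_{j=1}^d p_j) = \log(\mathcal{H}^d(G))$, the bound (\ref{eq:KolG}) follows. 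When $n$ and $p_1,\dots,p_d$ are supplied as side information, the prefixes are unnecessary and only the $n\sum_{j=1}^d \log(p_j) = n\log(\mathcal{H}^d(G))$ coordinate bits remain, which is exactly (\ref{eq:KolcondG}).

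The only genuinely delicate point I expect is \emph{decodability}: to parse the concatenated string the decoder must know $d$ and the field widths $\log(p_j)$ before reading the vertex vectors, which for the unconditional bound (\ref{eq:KolG}) requires a self-delimiting presentation of $n$ and the $p_j$. This is the main thing to check, but it only perturbs lower-order logarithmic terms and so is harmless for the stated inequalities; in the conditional case (\ref{eq:KolcondG}) the difficulty disappears entirely, since $n$ and all the widths are known in advance and the vertex block can be read off directly.
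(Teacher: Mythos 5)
Your proposal is correct and follows essentially the same route as the paper: take the minimal embedding of $\overline{G}$ into $K_{p_1}\times\dots\times K_{p_d}$ from Corollary \ref{cor:minvolspace}, encode each vertex by its coordinate vector via Theorem \ref{thm:pdimcharact}, and count $(n+1)\sum_j\log(p_j)+\log(n)$ bits (respectively $n\sum_j\log(p_j)$ bits in the conditional case), using $\sum_j\log(p_j)=\log(\mathcal{H}^d(G))$. Your explicit verification that the vectors determine $G$ (via the induced-subgraph property) and your remark on self-delimiting parsing are points the paper leaves implicit, but they do not change the argument.
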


Let $p^* = \max_j p_j$. Then we have $K(G)\leq (n+1)d\log(p^*) + \log(n),$\newline $K(G|n,p_1,...,p_d)\leq nd\log(p^*).$ By minimality of the representation (\ref{graphembedmin}), we have $p^* \leq n$. Thus $K(G) = O(dn\log(n))$, $K(G|n,p_1,...,p_d) = O(dn\log(n))$. So, Hausdorff (and Prague) dimension could be considered as a measure of descriptive complexity of a  graph.  In particular, for graphs with a small Hausdorff dimension (\ref{eq:KolG})-(\ref{eq:KolcondG}) give better estimation of their Kolmogorov complexity than the standard estimations mentioned above.

Relations between Hausdorff (Prague) dimension and Kolmogorov complexity could be used to derive lower bound for Hausdorff (and Prague) dimension in a typical case. More rigorously, let $X$ be a graph property and $\mathcal{P}_n(X)$ be the set of labeled n-vertex graphs having $X$. Property $X$ holds for {\it almost all graphs} \cite{erdHos1977chromatic}, if $|\mathcal{P}_n(X)|/2^{\binom{n}{2}} \rightarrow 1$ as $n \rightarrow \infty$. We will use the following lemma:

% \begin{lemma}\label{lem:kolcomplb}\cite{buhrman1999kolmogorov}
%  Let $A\subseteq \mathbb{B}^*$ be a finite set of binary strings, $y\in \mathbb{B}^*$ be a fixed string and $c > 0$ be an integer. Then $A$ has at least $|A|(1-2^{-c})+1$ elements $x$ such that $K(x|y) \geq \floor{\log(|A|)} - c$ 
% \end{lemma}

% Following \cite{buhrman1999kolmogorov}, let $|A|$ be the set of labeled $n$-vertex graphs encoded using string representations of their adjacency matrices, $y$ be an empty string and $c = \delta(n)$ be a positive-valued function such that $\delta(n) \rightarrow \infty$ as $n \rightarrow \infty$. The Lemma \ref{lem:kolcomplb} imply the following

\begin{lemma}\label{lem:kolcomplbgr}\cite{buhrman1999kolmogorov}

For every $n > 0$ and $\delta: \mathbb{N}\rightarrow \mathbb{N}$, there are at least $2^{\binom{n}{2}}(1-2^{-\delta(n)})$ $n$-vertex labeled graphs $G$ such that $K(G|n) \geq \frac{n(n-1)}{2} - \delta(n)$.
\end{lemma}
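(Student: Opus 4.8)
The plan is to use the incompressibility (counting) method, which here is essentially a pigeonhole argument. The total number of $n$-vertex labeled graphs is exactly $2^{\binom{n}{2}}$, since each of the $\binom{n}{2}$ potential edges is independently present or absent. I would establish the dual bound: the number of \emph{compressible} graphs, i.e. those with $K(G|n) < \frac{n(n-1)}{2} - \delta(n)$, is strictly smaller than $2^{\binom{n}{2}}\cdot 2^{-\delta(n)}$. Subtracting this from the total immediately yields the claimed lower bound $2^{\binom{n}{2}}(1 - 2^{-\delta(n)})$ on the number of incompressible graphs, which are precisely those satisfying $K(G|n) \geq \frac{n(n-1)}{2} - \delta(n)$.

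The key step is the upper bound on the number of compressible graphs. By definition $K(G|n)$ is the length of a shortest binary program that, given $n$, outputs the encoding of $G$; so a compressible $G$ admits such a program of length at most $\frac{n(n-1)}{2} - \delta(n) - 1$. Because a program together with the fixed input $n$ determines its output uniquely, each program yields at most one graph, so distinct compressible graphs require distinct programs. Hence the number of compressible graphs is bounded by the number of binary strings of length strictly less than $\frac{n(n-1)}{2} - \delta(n)$.

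I would then invoke the elementary count that the number of binary strings of length less than $\ell$ equals $\sum_{i=0}^{\ell-1} 2^i = 2^\ell - 1 < 2^\ell$. Taking $\ell = \frac{n(n-1)}{2} - \delta(n)$ and recalling $\frac{n(n-1)}{2} = \binom{n}{2}$, this gives fewer than $2^{\binom{n}{2} - \delta(n)} = 2^{\binom{n}{2}}\cdot 2^{-\delta(n)}$ compressible graphs, which closes the argument.

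There is no genuine obstacle here, as the argument is a clean pigeonhole count; the only points demanding care are bookkeeping ones. One should fix the convention that $K(\cdot\mid n)$ denotes plain (not prefix-free) conditional complexity, so that the naive string count applies directly, and one should handle the integrality of the threshold by replacing $\frac{n(n-1)}{2} - \delta(n)$ with its floor, which does not weaken the bound. The strict inequality $2^\ell - 1 < 2^\ell$ leaves a small amount of slack, so the stated bound holds comfortably.
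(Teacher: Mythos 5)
Your proof is correct. Note that the paper gives no proof of this lemma at all --- it is imported verbatim from the cited reference \cite{buhrman1999kolmogorov} --- so there is no internal argument to compare against; your counting argument is exactly the standard incompressibility proof behind that citation: graphs with $K(G|n) < \binom{n}{2}-\delta(n)$ are in injective correspondence with programs of length at most $\binom{n}{2}-\delta(n)-1$, of which there are fewer than $2^{\binom{n}{2}-\delta(n)}$, and subtracting from the total $2^{\binom{n}{2}}$ gives the stated bound. Your bookkeeping remarks (plain conditional complexity, integrality of the threshold) are the right points of care, and nothing is missing.
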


Then the following theorem is true:

\begin{theorem}\label{thm:almostalldim}
For every $\epsilon > 0$, almost all graphs have Prague dimension $d$ such that

\begin{equation}\label{lowbounddim}
d \geq \frac{1}{1+\epsilon}\Big(\frac{n-1}{2\log(n)} - \frac{1}{n}\Big)
\end{equation}
\end{theorem}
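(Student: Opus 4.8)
The plan is to use the incompressibility method from Kolmogorov complexity theory, combining the upper bound on $K(G)$ in terms of Prague dimension (equation \eqref{eq:KolcondG}) with the lower bound on $K(G|n)$ that holds for almost all graphs (Lemma \ref{lem:kolcomplbgr}). The key observation is that if a graph $G$ has small Prague dimension, then its complement $\overline{G}$ embeds into a product of few complete graphs, so $\overline{G}$ (and hence $G$, since complementation costs nothing in Kolmogorov complexity up to an additive constant) admits a short description. But almost all graphs are incompressible, so their Prague dimension cannot be too small. Turning this tension into the explicit bound \eqref{lowbounddim} is the whole content.

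First I would fix $\epsilon > 0$ and choose the function $\delta(n)$ in Lemma \ref{lem:kolcomplbgr} appropriately — something like $\delta(n) = \log(n)$ or any slowly growing function ensuring $2^{-\delta(n)} \to 0$, so that the fraction $(1 - 2^{-\delta(n)})$ of graphs satisfying $K(G|n) \geq \binom{n}{2} - \delta(n)$ tends to $1$, which is exactly the ``almost all graphs'' condition. For each such incompressible graph $G$ with $dim_P(G) = d$, the upper-bound proposition applied to $\overline{G}$ gives a description of $\overline{G}$, and since $K(\overline{G}|n)$ and $K(G|n)$ differ by at most an additive constant (one recovers $G$ from $\overline{G}$ and $n$ by flipping all entries), we get $K(G|n) \leq nd\log(p^*) + O(1) \leq nd\log(n)$ using $p^* \leq n$.

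Next I would chain the two inequalities. The incompressibility lower bound reads $K(G|n) \geq \frac{n(n-1)}{2} - \delta(n)$, and the dimension upper bound reads $K(G|n) \leq nd\log(n)$ (absorbing lower-order terms). Combining,
\begin{equation}
nd\log(n) \gtrsim \frac{n(n-1)}{2} - \delta(n),
\end{equation}
and dividing through by $n\log(n)$ isolates $d \gtrsim \frac{n-1}{2\log(n)} - \frac{\delta(n)}{n\log(n)}$. With $\delta(n) = \log(n)$ the subtracted term becomes $\frac{1}{n}$, matching the $-\frac{1}{n}$ inside the parenthesis of \eqref{lowbounddim}. The factor $\frac{1}{1+\epsilon}$ then serves to absorb the additive $O(1)$ and any rounding slack coming from the $\log$ and ceiling terms in the encoding length, which for all sufficiently large $n$ is dominated by an $\epsilon$-fraction of the main term.

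The main obstacle I anticipate is bookkeeping the constants and lower-order terms carefully enough to land precisely on the stated bound rather than merely an asymptotically equivalent one. In particular, the encoding in \eqref{eq:KolcondG} is conditioned on $n, p_1, \dots, p_d$, so to apply Lemma \ref{lem:kolcomplbgr} (which conditions only on $n$) I must account for the cost of encoding the $p_j$'s themselves — but this is at most $d\log(n)$ additional bits, a term of lower order that the $\frac{1}{1+\epsilon}$ slack comfortably absorbs. The honest version of the argument is to write $K(G|n) \leq nd\log(n) + d\log(n) + O(1) = (n+1)d\log(n) + O(1)$, feed this into the incompressibility inequality, and verify that for large $n$ the resulting bound on $d$ is at least $\frac{1}{1+\epsilon}\big(\frac{n-1}{2\log(n)} - \frac{1}{n}\big)$; the strict inequality direction and the choice of threshold for ``large $n$'' depend on $\epsilon$, which is exactly why the theorem is stated per fixed $\epsilon$.
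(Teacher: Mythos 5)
Your proposal is correct and follows essentially the same route as the paper's proof: invoke Lemma \ref{lem:kolcomplbgr} with $\delta(n)=\log(n)$, chain the resulting incompressibility lower bound $K(G|n)\geq \frac{n(n-1)}{2}-\log(n)$ against the dimension-based encoding upper bound $(n+1)d\log(n)+O(\log n)$, and use the $\frac{1}{1+\epsilon}$ factor to absorb the lower-order terms for $n\geq \lceil 2/\epsilon\rceil$, which after dividing by $n\log(n)$ yields exactly \eqref{lowbounddim}. Your bookkeeping is in fact slightly more careful than the paper's, which cites the conditional bound \eqref{eq:KolcondG} where the unconditional \eqref{eq:KolG} (already including the cost of the $p_j$'s) is what is actually used, and which leaves the passage between $dim_P(G)$ and $dim_P(\overline{G})$ implicit.
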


\begin{proof}
Let $n_{\epsilon} = \ceil{\frac{2}{\epsilon}}$. Consider a graph $G$ with $n\geq n_{\epsilon}$. From (\ref{eq:KolcondG}) we have $K(G)\leq (n+1)d\log(n) + \log(n)$. Using the fact, that $\frac{1}{n} + \frac{1}{nd}\leq \frac{2}{n}\leq \epsilon$, it is  straightforward to check that $(n+1)d\log(n) + \log(n) \leq (1+\epsilon)nd\log(n)$. Therefore we have

\begin{equation}\label{eq:kolmepsilon}
K(G)\leq (1+\epsilon)nd\log(n)
\end{equation}

Let $X$ be the set of all graphs $G$ such that 

\begin{equation}\label{eq:kolmcondlog}
K(G|n) \geq \frac{n(n-1)}{2} - \log(n)
\end{equation}

Using Lemma \ref{lem:kolcomplbgr} with $\delta(n) = \log(n)$, we conclude that  $|\mathcal{P}_n(X)|/2^{\binom{n}{2}} \geq 1-\frac{1}{n}$, and so almost all graphs have the property $X$.

Now it is easy to see that for graphs with the property $X$ and with $n\geq n_{\epsilon}$ the inequality (\ref{lowbounddim}) holds. It follows by combining inequalities (\ref{eq:kolmepsilon})-(\ref{eq:kolmcondlog}) using the fact that $K(G|n)\leq K(G)$. It concludes the proof.
\end{proof}
Considerations above and proof of Theorem \ref{thm:almostalldim} imply that for every $\epsilon > 0$ and $n$-vertex graph $G$ with sufficiently large $n$, $dim_H(G) \geq  C\frac{K(G)}{n}$, where $C=\frac{1}{(1+\epsilon)\log(p^*)}$. Interestingly, similar relations between Kolmogorov complexity and analogues of Hausdorff dimension hold for other objects. In particular, for Cantor space $\mathcal{C}$ (the space of all infinite 0-1 sequences) it is proved in \cite{mayordomo2002kolmogorov} that Kolmogorov complexity and effective (or constructive) Hausdorff dimension $dim_H(s)$ of each sequence $s$ are related as follows: $dim_H(s)= \liminf\limits_{n \rightarrow \infty} \frac{K(s_n)}{n}$ (here $s_n$ is the prefix of $s$ of length $n$). Similar estimations are known for other variants of Hausdorff dimension \cite{ryabko1994complexity,staiger1993kolmogorov}.

\section{Fractal graphs}\label{fracgraph}

Importantly, the relation (\ref{lebvshaus}) between Lebesgue and Hausdorff dimensions of topological spaces remains true for graphs.

\begin{proposition}\label{lebvshausgraph}
For any graph $G$

\begin{equation}
dim_R(G) - 1 = dim_L(G) \leq dim_H(G) = dim_P(\overline{G})-1.
\end{equation}
\end{proposition}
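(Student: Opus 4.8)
The plan is to observe that the two outer equalities carry no content beyond the definitions already made: the left equality $dim_R(G)-1 = dim_L(G)$ is exactly the defining equation (\ref{kdimeqleb}), and the right equality $dim_H(G) = dim_P(\overline{G})-1$ is the defining equation (\ref{hdimgraph}). Hence the whole proposition reduces to the middle inequality $dim_L(G) \leq dim_H(G)$, and after substituting those two definitions this is equivalent to the purely combinatorial statement
\[
dim_R(G) \leq dim_P(\overline{G}).
\]
So I would first make this reduction explicit and then spend the rest of the argument on this single inequality.

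To establish $dim_R(G) \leq dim_P(\overline{G})$, I would set $k = dim_P(\overline{G})$ and construct a representation of $G$ witnessing $dim_R(G)\leq k$. The natural bridge is Theorem \ref{thm:pdimcharact}: since $dim_P(\overline{G})\leq k$, its equivalent condition (3) guarantees that $G$ is the intersection graph of some strongly $k$-colorable hypergraph $\mathcal{H}$ without multiple edges. I would then compare this representation against the characterization of rank dimension given by Theorem \ref{thm:covervsinter}, which states that $dim_R(G)\leq k$ holds precisely when $G$ is the intersection graph of a hypergraph of rank $\leq k$.

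The crux is therefore a short observation linking the two hypergraph conditions: a strongly $k$-colorable hypergraph automatically has rank at most $k$. Indeed, in any strong $k$-coloring the vertices lying in a single edge receive pairwise distinct colors, so by pigeonhole an edge can contain at most $k$ vertices. I expect this to be the only nonroutine step, and it is genuinely elementary; the difficulty of the proposition is conceptual (recognizing which equivalent formulation of each dimension to invoke) rather than technical.

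Combining these facts, the hypergraph $\mathcal{H}$ furnished by condition (3) of Theorem \ref{thm:pdimcharact} has rank $\leq k$, so Theorem \ref{thm:covervsinter} yields $dim_R(G)\leq k = dim_P(\overline{G})$, which is the desired middle inequality and completes the proof. I do not anticipate a genuine obstacle here; the only point requiring care is bookkeeping, namely keeping track of whether each theorem is being applied to $G$ or to its complement $\overline{G}$.
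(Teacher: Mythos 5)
Your proposal is correct and follows essentially the same route as the paper: both reduce the statement to $dim_R(G) \leq dim_P(\overline{G})$ via the definitions, invoke condition (3) of Theorem \ref{thm:pdimcharact} to represent $G$ as the intersection graph of a strongly $k$-colorable hypergraph, observe that such a hypergraph has rank at most $k$, and conclude via Theorem \ref{thm:covervsinter}. The only difference is that you spell out the pigeonhole reason for the rank bound, which the paper dismisses as obvious.
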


\begin{proof}
Let Prague dimension of a graph $\overline{G}$ is equal to $k$. Then by Theorem \ref{thm:pdimcharact} $G$ is an intersection graph of strongly $k$-colorable hypergraph. Since rank of every such hypergraph  obviously does not exceed $k$, Theorem \ref{thm:covervsinter} implies, that $dim_R(G)\leq k$.
\end{proof}

Definitions of dimensions immediately imply, that both Lebesgue and Hausdorff dimensions are monotone with respect to induced subgraphs.

 Analogously to the definition of fractals for topological spaces, we say, that a graph $G$ is a {\it fractal}, if $dim_L(G) < dim_H(G)$, i.e. $dim_R(G) < dim_P(\overline{G})$.

The following proposition provides a first non-trivial example of fractal graphs

\begin{proposition}
Triangle-free fractals are exactly triangle-free graphs of class 2
\end{proposition}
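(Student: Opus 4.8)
The plan is to reduce the fractality inequality $dim_R(G) < dim_P(\overline{G})$ to two classical edge‑colouring parameters and then invoke Vizing's theorem. Concretely, I would show that for a triangle‑free graph $G$ one has $dim_R(G) = \Delta(G)$ and $dim_P(\overline{G}) = \chi'(G)$, where $\Delta$ is the maximum degree and $\chi'$ the chromatic index. The rank dimension is the easy half: since $G$ is triangle‑free every clique has at most two vertices, so a clique cover consists only of edges and single vertices, and an edge $uv$ can be covered solely by the clique $\{u,v\}$ itself. Hence every clique cover must contain all edges of $G$, while adding singleton cliques can only raise the number of cliques through a vertex. A minimum clique cover is therefore the edge set of $G$, in which each $v$ lies in exactly $\deg(v)$ cliques, and by Theorem \ref{thm:covervsinter} this gives $dim_R(G) = \max_v \deg(v) = \Delta(G)$.

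The heart of the argument is identifying $dim_P(\overline{G})$ with $\chi'(G)$ through the characterization of Prague dimension by separating equivalent covers (Theorem \ref{thm:pdimcharact}, part 2). For triangle‑free $G$ the only spanning equivalence subgraphs are matchings (plus isolated vertices), so an equivalent cover $\{M_1,\dots,M_k\}$ is a family of $k$ matchings, and the covering requirement that each edge lie in some $M_i$ says precisely that these matchings cover $E(G)$; the least such $k$ is $\chi'(G)$. It then remains to handle the separating condition. Two vertices $u,v$ share a component of $M_i$ if and only if $uv \in M_i$, so a pair is separated by $M_i$ exactly when $uv \notin M_i$, with non‑adjacent pairs separated automatically in every $M_i$. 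Taking as the matchings the $\chi'(G)$ colour classes of an optimal proper edge colouring, each edge lies in exactly one class and is thus absent from—and its endpoints separated by—every other class. When $\chi'(G) \ge 2$ this produces a separating equivalent cover with $\chi'(G)$ parts, yielding $dim_P(\overline{G}) = \chi'(G)$.

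Combining the two computations with Proposition \ref{lebvshausgraph}, a triangle‑free $G$ is a fractal iff $dim_R(G) < dim_P(\overline{G})$, i.e. iff $\Delta(G) < \chi'(G)$; by Vizing's theorem $\chi'(G)\in\{\Delta(G),\Delta(G)+1\}$, so this happens precisely when $\chi'(G)=\Delta(G)+1$, that is, when $G$ is of class $2$. I expect the delicate point to be exactly the separating step—both verifying that no colour beyond $\chi'(G)$ is ever needed, and the boundary case $\chi'(G)=1$. A non‑empty matching has $dim_R(G)=1$ yet $dim_P(\overline{G})=2$, since a single colour class cannot separate matched endpoints and forces a second (empty) matching; thus the clean identity $dim_P(\overline{G})=\chi'(G)$, and with it the equivalence, is to be read for graphs with $\Delta(G)\ge 2$, for which $\chi'(G)\ge 2$ and the argument above goes through without exception.
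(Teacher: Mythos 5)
Your proof is correct and takes essentially the same route as the paper: reduce fractality to $\Delta(G) < \chi'(G)$ via the two identities $dim_R(G)=\Delta(G)$ and $dim_P(\overline{G})=\chi'(G)$ for triangle-free graphs, then invoke Vizing's theorem. The only differences are presentational --- you prove both identities directly from the clique-cover and separating-equivalent-cover characterizations (the paper asserts the first and cites the literature for the second), and you flag the matching exception ($\chi'(G)=1$, where $dim_P(\overline{G})=2$ forces fractality despite class 1) explicitly, which the paper likewise sets aside by restricting to graphs ``that are not disjoint unions of edges.''
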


\begin{proof}
Note that for triangle-free graphs $dim_R(G) = \Delta(G)$. Moreover, it can be shown (see \cite{hell2004graphs}) that if triangle-free graph $G$ is not a disjoint union of edges $nK_2$, then $dim_P(\overline{G}) = \chi'(G)$, where $\chi'(G)$ is a chromatic index. Therefore triangle-free fractals, that are not disjoint unions of edges, are exactly triangle-free graphs of class 2. On the other hand, all bipartite graphs except $nK_2$ are not fractals, since they all are traingle-free graphs of class 1.
\end{proof}

The connection between fractality and class 2 graphs continue to hold for graphs with maximal degree $\Delta(G) \leq 3$. To formulate the corresponding theorem, we need to introduce the following graph operation: let vertices $a,b,c\in V(G)$ form a triangle. 
% which does not share edges with other triangles. 
Replace the subgraph $G[a,b,c]$ with the graph $H_{12}$ shown on Fig. \ref{fig:H12}, and edges $ua,vb,wc\in E(G)$ (if such edges exist) with edges $ux_a$, $vx_b$, $wx_c$, where $x_a,x_b,x_c$ are vertices of degree 1 of the graph $H_{12}$. Let $\mathcal{T} = \{T_1,...,T_k\}$ be a set of disjoint triangles of the graph $G$. The graph $\widetilde{G}(\mathcal{T})$ is obtained by applying the operation described above to each triangle $T_i$, $i=1,...,k$. 

% 1) Let vertices $a,b,c,d\in V(G)$ form a subgraph of $G$ isomorphic to the diamond graph (complete graph $K_4$ without an edge). Collapse the subgraph $G[a,b,c,d]$ into a single vertex.

% \begin{figure}[h]
% \begin{center}
% \includegraphics[width=\textwidth,height=\textheight,keepaspectratio] {operations.pdf}
% \caption{\label{fig:operations} {\small Operations}}
% \end{center}
% \end{figure}

\begin{figure}[h]
\begin{center}
\includegraphics[width=7cm,height=7cm,keepaspectratio] {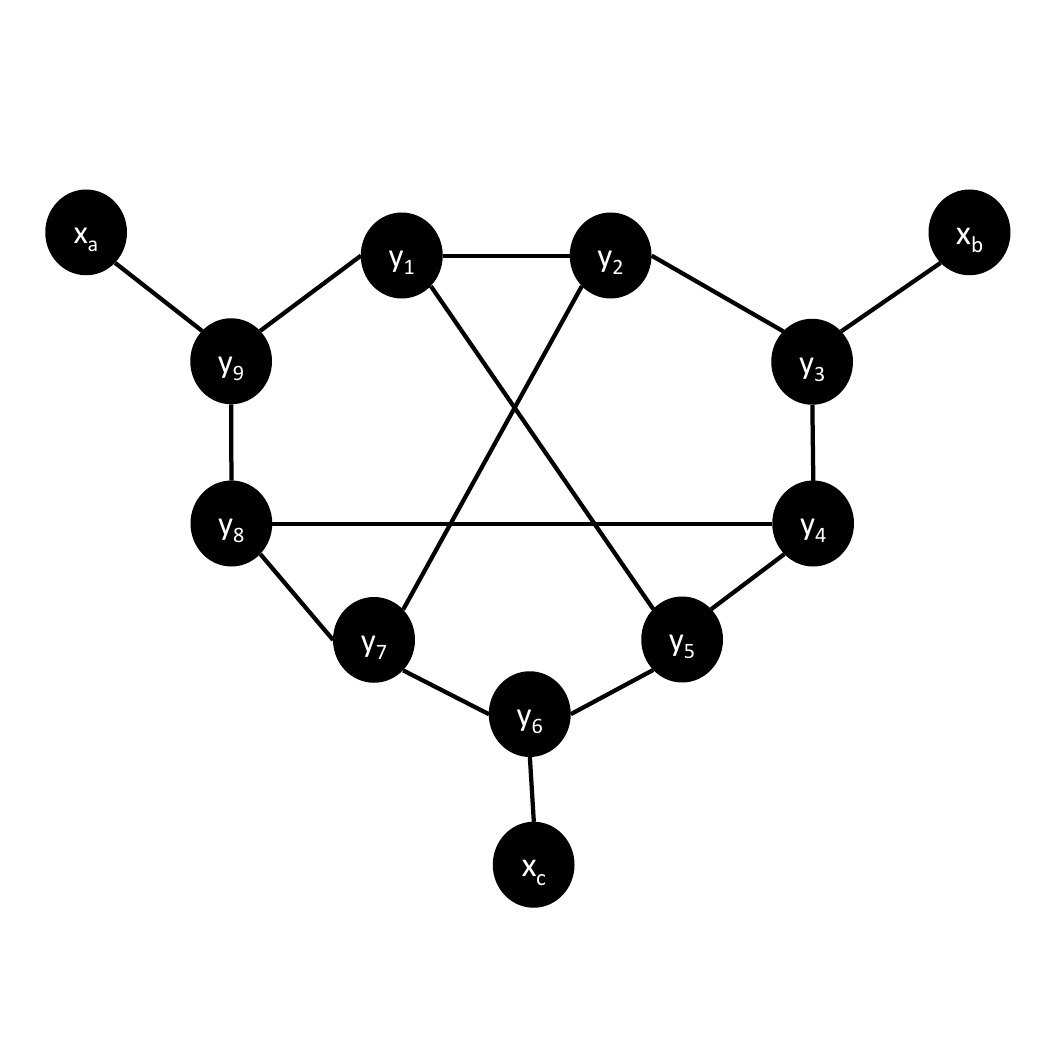}
\caption{\label{fig:H12} {\small Graph $H_{12}$}}
\end{center}
\end{figure}

\begin{theorem}\label{fractaldelta3}
Let $G$ be a connected graph with $\Delta(G) \leq 3$. Then $G$ is a fractal, if and only if one of the following conditions hold:

\begin{itemize}
\item[1)] $G=K_4$
\item[2)] $G$ is claw-free, but contains diamond or odd hole
\item[3)] For any set of disjoint triangles $\mathcal{T}$, the graph $\widetilde{G}(\mathcal{T})$ is of class 2
\end{itemize}
\end{theorem}

\begin{proof}
It is easy to see by definition, that $dim_R(K_4) = 1 < dim_P(O_4) = 2$, which implies that $K_4$ is a fractal. Assume further that $G \ne K_4$. Then obviously, since  $dim_R(G) \leq 3$, $G$ is a fractal if and only if one of the following conditions hold:
\begin{itemize}
\item [(a)] $dim_R(G) = 2$, $dim_P(\overline{G}) = 3$
\item [(b)] $dim_P(\overline{G}) \geq 4$
\end{itemize}

First we will show that the condition (a) is equivalent to the condition 2) from the formulation of the theorem. By Theorem \ref{thm:covervsinter}, $dim_R(G) = 2$ if and only if $G$ is a line graph of a multigraph. Such graphs are characterized by a list of 7 forbidden induced subgraphs \cite{bermond1973representative}, only one of which ($K_{1,3}$) has  the maximal degree, which does not exceed 3 (see e.g. \cite{berge1984hypergraphs}). Therefore $dim_R(G) = 2$ if and only if $G$ is claw-free. 
Furthermore, by Theorem \ref{thm:pdimcharact} $dim_P(\overline{G}) = 2$ if and only if $G$ is a line graph of a bipartite graph. These graphs are exactly (claw,diamond,odd-hole)-free graphs \cite{brandstadt1999graph}. By combining these characterizations together, we get that $dim_R(G) = 2$, $dim_P(\overline{G}) = 3$ if and only if $G$ is claw-free, but contains diamond or odd hole.

Now we will show that the condition (b) is equivalent to the condition 3) from the formulation of the theorem. To do it, we will use the following property of the graph $H_{12}$:

\begin{lemma}\label{h12color}
In every edge 3-coloring of the graph $H_{12}$ edges $x_ay_9,x_by_3,x_cy_6$ have the same color
\end{lemma}
\begin{proof}

Let $c:E(H_{12}) \rightarrow \{1,2,3\}$ be an edge 3-coloring of $H_{12}$. Assume first that $c(y_1y_9) = c(y_5y_6) = 1$, $c(y_1y_2) = c(y_5y_6) = 2$. 

\end{proof}

\end{proof}

As another example of fractal graphs, consider Sierpinski gasket graphs $S_n$ \cite{teguia06serp,klavzar2008coloring,teufl2006spanning}. This class of graphs is associated with the Sierpinski gasket - well-known topological fractal with a Hausdorff dimension $\log(3)/\log(2)\approx 1.585$. Edges of $S_n$ are line segments of the $n$-th approximation of the Sierpinski gasket, and vertices are intersection points of these segments (Fig. \ref{fig:serpGasket}). 

Sierpinski gasket graphs can be defined recursively as follows. Consider tetrads $T_n = (S_n,x_1,x_2,x_3)$, where $x_1,x_2,x_3$ are distinct vertices of $S_n$ called {\it contact vertices}. The first Sierpinski gasket graph $S_1$ is a triangle $K_3$ with vertices $x_1,x_2,x_3$, the first tetrad is defined as $T_1 = (S_1,x_1,x_2,x_3)$. The $(n+1)$-th Sierpinski gasket graph $S_{n+1}$ is constructed from 3 disjoint copies $(S_n,x_1,x_2,x_3)$, $(S'_n,x'_1,x'_2,x'_3)$,  $(S''_n,x''_1,x''_2,x''_3)$ of $n$-th tetrad $T_n$ by gluing together $x_2$ with $x'_1$, $x'_3$ with $x''_2$ and $x_3$ with $x''_1$; the corresponding $(n+1)$-th tetrad is $T_{n+1} = (S_{n+1},x_1,x'_2,x''_3)$.  

\begin{proposition}\label{serpgasket}
For every $n\geq 2$ Sierpinski gasket graph $S_n$ is a fractal with $dim_L(S_n) = 1$ and $dim_H(S_n) = 2$
\end{proposition}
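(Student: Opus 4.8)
The plan is to verify the two equalities separately, using throughout the identifications $dim_L(G)=dim_R(G)-1$ and $dim_H(G)=dim_P(\overline G)-1$, so that the claim becomes $dim_R(S_n)=2$ and $dim_P(\overline{S_n})=3$. For the Lebesgue part I would work with the $3^{n-1}$ \emph{elementary} (smallest) triangles of $S_n$, i.e. the copies of $S_1$ produced by the recursion. Every edge of $S_n$ lies in exactly one such triangle, each contact vertex lies in one of them, and every other vertex (of degree $4$) lies in exactly two. Hence this family is a clique cover in which every vertex is covered at most twice, so by Theorem \ref{thm:covervsinter} $dim_R(S_n)\le 2$. For the reverse inequality I would note that $dim_R(G)=1$ forces $G$ to be a disjoint union of cliques; since $S_n$ is connected but not complete for $n\ge 2$ (e.g. $x_1$ and the midpoint opposite to it are non-adjacent), we get $dim_R(S_n)=2$ and thus $dim_L(S_n)=1$.

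For the upper bound $dim_H(S_n)\le 2$ I would build a separating equivalent cover of $S_n$ with three equivalence graphs and invoke Theorem \ref{thm:pdimcharact}(2). The idea is to properly $3$-color the elementary triangles so that any two triangles sharing a vertex get distinct colors; each color class is then a set of pairwise vertex-disjoint triangles, i.e. an equivalence graph $M_i$ (I include only the chosen triangle edges, so $M_i$ is literally a disjoint union of $K_3$'s plus isolated vertices), and the three classes cover all edges because every edge sits in an elementary triangle. The key structural input is that the intersection graph of the elementary triangles of $S_n$ (triangles adjacent iff they share a vertex) is isomorphic to $S_{n-1}$, which is $3$-chromatic; this yields the required coloring. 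Separation is then automatic: two non-adjacent vertices share no triangle and hence lie in distinct components of every $M_i$, while two adjacent vertices share exactly one elementary triangle (two elementary triangles meet in at most one vertex), so they are separated in the other two coordinates. This gives $dim_P(\overline{S_n})\le 3$, i.e. $dim_H(S_n)\le 2$.

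The lower bound $dim_H(S_n)\ge 2$ is the crux, and I would reduce it to $S_2$: the six vertices of any single ``triforce'' of $S_n$ (three elementary triangles around one downward triangle) induce a copy of $S_2$, and $dim_H$ is monotone under induced subgraphs, so it suffices to prove $dim_P(\overline{S_2})\ge 3$. By Theorem \ref{thm:pdimcharact}(2) this amounts to showing that the nine edges of $S_2$ cannot be covered by two equivalence graphs. I would argue by local forcing at the three degree-$4$ midpoints $a,b,c$: at each such vertex the neighbors induce a path $P_4$, so the two ``colors'' meeting it can each form a clique only along a single edge of that path, and covering all four neighbors forces the two colors to be the two end-edges of the path. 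Propagating this forced assignment around the central triangle $\{a,b,c\}$ makes the edges $ac$ and $bc$ receive the color opposite to $ab$, whence the color-class containing both $ac$ and $bc$ at $c$ would require $ab$ to share that color as well --- a contradiction. Thus two equivalence graphs never suffice, giving $dim_P(\overline{S_2})=3$ and, by monotonicity, $dim_H(S_n)=2$ for all $n\ge 2$; in particular $dim_L(S_n)=1<2=dim_H(S_n)$, so $S_n$ is a fractal. I expect this central-triangle frustration argument to be the main obstacle, since it is where the ``hole'' in the gasket obstructs a two-coordinate representation; the rest is bookkeeping on the recursive structure.
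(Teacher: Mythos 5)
Your overall architecture coincides with the paper's. The paper's inductively constructed clique cover is exactly your family of elementary triangles: its conditions (i)--(ii) are your ``contact vertices lie in one triangle, all others in two,'' and its condition (iii) is your 3-coloring of the triangles. Your lower-bound route (a small induced subgraph plus monotonicity of the dimensions) is also the paper's, except that the paper uses the 4-vertex witness $K_4-e$ with only an ``easy to see'' remark, whereas you work inside an induced $S_2$ and give an explicit frustration argument at the three degree-4 midpoints. That argument is correct (the $P_4$ induced on each midpoint's neighborhood forces the two cliques at that vertex to be the two elementary triangles, and propagating this around the central triangle is contradictory), and it is in fact more detailed than what the paper offers; your separation argument for adjacent and non-adjacent pairs is also sound and cleaner than the paper's inductive condition (iv).

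However, there is a genuine error in your key step for $dim_P(\overline{S_n})\le 3$: the intersection graph of the elementary triangles of $S_n$ is \emph{not} isomorphic to $S_{n-1}$ for $n\ge 3$. The graph $S_n$ has $3^{n-1}$ elementary triangles, while $|V(S_{n-1})|=(3^{n-1}+3)/2$; already for $n=3$ this is $9$ triangles versus the $6$ vertices of $S_2$. (The triangle intersection graph is actually the Sierpinski/Tower-of-Hanoi graph often denoted $S(3,n-1)$, which coincides with the gasket graph only for $n=2$.) The fact you need --- that this intersection graph is properly 3-colorable --- is nevertheless true, but it requires its own proof, and your proposal supplies none once the false isomorphism is removed. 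The cheapest repair is exactly the paper's induction: maintain the invariant that the three triangles containing the three contact vertices of $S_n$ receive three distinct colors; when three colored copies of $S_n$ are glued to form $S_{n+1}$, the two triangles meeting at each identified vertex automatically carry different colors (they contain contact vertices of different indices), and the invariant persists for the new contact vertices. Alternatively, cite the known 3-chromaticity of Sierpinski graphs. With that replacement, the rest of your proof goes through and the proposition follows.
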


\begin{proof}
First, we will prove that 

\begin{equation}\label{formula:dimssn}
dim_R(S_n)\leq 2, dim_P(\overline{S_n})\leq 3.
\end{equation}

for every $n\geq 2$. We will show it using an induction by $n$. In fact, we will prove slightly stronger fact: for any $n\geq 2$ there exists a clique cover $\mathcal{C} = \{C_1,...,C_m\}$ such that (i) every non-contact vertex is covered by two cliques from $\mathcal{C}$; (ii) every contact vertex is covered by one clique from $\mathcal{C}$; (iii) cliques from $\mathcal{C}$ can be colored using 3 colors in such a way, that intersecting cliques receive different colors and cliques containing different contact vertices also receive different colors; (iv) every two distinct vertices are separated by some clique from $\mathcal{C}$.

For $n=2$ the clique cover $\mathcal{C}$ consisting of 3 cliques, that contain contact vertices, obviously satisfies conditions (i)-(iv) (see Fig. \ref{fig:serpGasket}). Now suppose that $\mathcal{C}$, $\mathcal{C'}$ and $\mathcal{C''}$ are clique covers of $S_n$, $S'_n$ and $S''_n$ with properties (i)-(iv). Assume that $x_i\in C_i$, $x'_i\in C'_i$. $x''_i\in C''_i$ and $C_i,C'_i,C''_i$ have colors $i$, $i=1,...,3$. Then it is straightforward to check, that $\mathcal{C}\cup \mathcal{C'}\cup \mathcal{C''}$ with all cliques keeping their colors is a clique cover of $S_{n+1}$ that satisfies (i)-(iv). So, (\ref{formula:dimssn}) is proved.

Finally, note that for every $n\geq 2$ the graph $S_n$ contains graphs $K_{1,2}$ and $K_4 - e$ as induced subgraphs. Since $dim_R(K_{1,2}) = 2$ and $dim_P(\overline{K_4 - e}) = 3$ (the later is easy to see using clique cover formulation), we have equalities in (\ref{formula:dimssn})
\end{proof}

\begin{figure}[h]
\begin{center}
\includegraphics[width=\textwidth,height=\textheight,keepaspectratio] {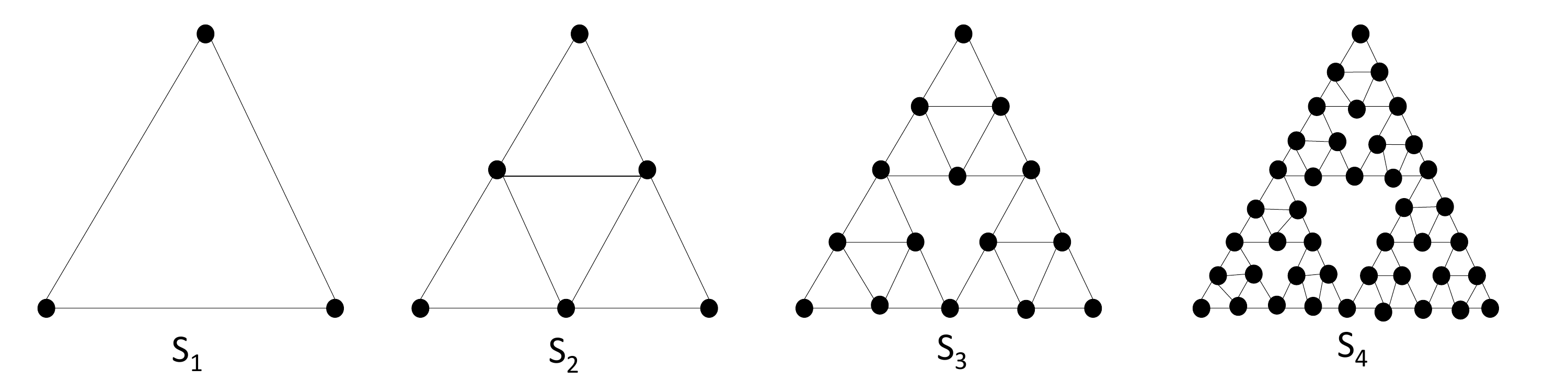}
\caption{\label{fig:serpGasket} {\small Sierpinski gasket graphs $S_1-S_4$}}
\end{center}
\end{figure}

It is important to emphasize that Lebesgue and Hausdorff dimensions of Sierpinski gasket graphs agree with the corresponding dimensions of Sierpinski gasket fractal. As a contrast, note that rectangular grid graphs (cartesian products of 2 paths) are not fractals (by K{\"o}nig theorem since they are bipartite), just like rectangles are not fractals in $\mathbb{R}^2$.

\section{Acknowlegements} The authors were partially supported by the NIH grant 1R01EB025022-01 "Viral Evolution and Spread of Infectious Diseases in Complex Networks: Big Data Analysis and Modeling".

\bibliographystyle{plain}
\bibliography{fracdim_biblio}

\end{document}